\theoremstyle{plain}
\newtheorem{theorem}{Theorem}[section]
\newtheorem{definition}[theorem]{Definition}
\newtheorem{lemma}[theorem]{Lemma}
\newtheorem{prop}[theorem]{Proposition}
\newtheorem{rem}[theorem]{Remark}
\renewcommand{\b}{\begin{equation}}
\newcommand{\e}{\end{equation}}
\newcommand\C{{\mathds{C}}}
\newcommand\R{{\mathds{R}}}
\title{On the J-flow in Sasakian manifolds
 }
\author[L. Vezzoni, M. Zedda]{Luigi Vezzoni, Michela Zedda}
\date{\today}
\subjclass[2010]{53C25; 53C44}
\keywords{Sasakian manifolds, geometric flows}
\address{Dipartimento di Matematica \lq\lq Giuseppe Peano\rq\rq \\ Universit\`a di Torino\\
Via Carlo Alberto 10\\
10123 Torino\\ Italy} 
 \email{luigi.vezzoni@unito.it; michela.zedda@gmail.com}
\thanks{This work was supported by the project FIRB ``Geometria differenziale e teoria geometrica delle funzioni'',
 the project PRIN
\lq\lq  Variet\`a reali e complesse: geometria, topologia e analisi armonica" and by G.N.S.A.G.A. of I.N.d.A.M} 
\begin{document}

\maketitle
\begin{abstract}
We study the space of Sasaki metrics on a compact manifold $M$ by introducing an odd-dimensional analogue of the $J$-flow. 
That leads to the notion of  {\em critical metric} in the Sasakian context. In analogy to the K\"ahler case, on a polarised Sasakian manifold there exists at most one {\em normalised} critical metric. The flow is a tool for texting the existence of such a metric.      
We show that some results proved by Chen in \cite{chen} can be generalised to the Sasakian case. In particular, the {\em Sasaki $J$-flow} is a gradient flow which has always a long-time solution minimising the distance on the space of Sasakian potentials of a polarized Sasakian manifold. The flow minimises an energy  functional whose definition depends on the choice of a background transverse K\"ahler form $\chi$. 
When $\chi$ has nonnegative transverse holomorphic bisectional curvature, the flow converges to a critical Sasakian structure.  
\end{abstract}

\section{Introduction}
Sasakian manifolds are the odd-dimensional counterpart of K\"ahler manifolds and are defined as odd-dimensional Riemannian manifolds $(M,g)$ whose Riemannian cone $(M\times\R^+,t^2g+dt^2)$ admits a K\"ahler structure. These manifolds are important for both geometric and physical reasons.  
In geometry they can be used  to produce new examples of complete K\"ahler manifolds, manifolds with special holonomy and Einstein metrics. Moreover, Sasakian manifolds play a role in the study of orbifolds since many K\"ahler orbifolds can be desingolarised by using Sasakian spaces. In theoretical physics these manifolds play a central role in the AdS/CFT correspondence (see e.g. \cite{CLPP,GM,GM1,MS,superstring1,MSY,superstring2}). We refer to \cite{bgbook,sparks} for general theory and recent advanced in the study of these manifolds.
%

%Sasakian manifolds can be considered as the odd-dimensional analogue of K\"ahler manifolds and are today considered  one of the most powerful tools  Einstein metrics \cite{bgkollar} and complete metrics with special holonomy \cite{}, 
%studying superstring theory \cite{} and desingolaring K\"ahler orbifolds. Typical examples of Sasakian manifolds are given by odd-dimensional spheres (see e.g. \cite{}), 
%the anticanolical bundle to a K\"ahler manifold, toric manifolds (see \cite{}), links (see \cite{}) and solvmanifolds (\cite{}). By definition a $(2n+1)$-dimensional Riemannian manifold $(M,g)$ is {\em Sasakian} if its Riemannian cone 
%$(M\times \R^+,dr^2+r^2g)$ admits a compatible K\"ahler structure. 
Given  a Sasakian manifold, the choice of a K\"ahler structure on the Riemannian cone determines 
a unitary Killing vector field $\xi$ of the metric $g$ and an endomorphism  $\Phi$ of the tangent bundle to  $M$ such that    
$$
\Phi^2=-{\rm Id}+\eta\otimes \xi\,,\quad g(\Phi\cdot,\Phi\cdot)=g(\cdot,\cdot)-\eta\otimes \eta\,, \quad g=\frac12 d\eta\circ  ({\rm Id}\otimes \Phi)+\eta\otimes \eta\,,
$$
$\eta$ being the $1$-form dual to $\xi$ via $g$. It turns out that $\eta$ is a contact form and that $\Phi$ induces a CR-structure $(\mathcal D,J)$ on $M$. Moreover, $\Phi(X)={\rm D}_X\xi$ for every vector field $X$ on $M$, where ${\rm D}$ is the Levi-Civita connection of $g$. 
%We denote, as usual, by $\mathcal D$ the contact distribution given by the kernel of $\eta$. 
The quadruple $(\xi,\Phi,\eta,g)$ is usually called a {\em Sasakian structure} and the pair $(\xi,J)$ can be seen as a polarization of $M$.

The research of this paper is mainly motivated by \cite{bgs,guanzhang,guanzhangA,he} where it is approached the study of Riemannian and symplectic aspects of the space of Sasakian potentials 
$\mathcal{H}$ on a polarised Sasakian manifold. Our approach consists in using  an analogue of the $J$-flow in the context of Sasakian Geometry obtaining some results similar to the ones proved in  the K\"ahler case by Chen in \cite{chen}. The $J$-flow is a gradient geometric flow of K\"ahler structures introduced and firstly studied  by Donaldson in \cite{donaldsonNC} from the point of view of moment maps and by Chen in \cite{chen} in relation to the Mabuchi energy. It is defined as the gradient flow of a functional $J_{\chi}$ defined on the space of {\em normalized} K\"ahler potentials whose definition depends on a fixed background K\"ahler structure $\chi$.  
Chen proved in \cite{chen} that the flow has always a unique long time solution which,  in the special case when $\chi$ has nonnegative biholomorphic curvature, converges to a critical K\"alher metric. Further results about the flow are obtained in \cite{LSS,songweinkoveJ,wein1,wein2}. 

As far as we know, the interest for geometric flows in foliated manifolds comes from \cite{LMR} where it is introduced a foliated version of the Ricci flow. 
Subsequently,  Smoczyk, Wang and Zhang proved in \cite{smoczyk} that the transverse Ricci flow preserves the Sasakian condition and study its long time behavior generalising the work of Cao in \cite{cao} to the Sasakian case. Some deep geometric and analytic aspects of the Sasaki Ricci flow were further investigated in \cite{collins0,collins1,collins2,collins3}. 

In analogy to the K\"ahler case, the {\em Sasaki $J$-flow} introduced in this paper (see Section \ref{wellposed} for the precise definition) is the gradient flow of a functional $J_{\chi}\colon \mathcal{H}\to \R$ whose definition depends on the choice of a transverse K\"ahler structure $\chi$. Sasakian metrics arising from critical points of the restriction of  $J_{\chi}$ to the space of {\em normalized} Sasakian potentials $\mathcal{H}_0$, are natural candidates to be {\em canonical} Sasakian metrics.

%Sasakian manifolds are today considered important both for physical and mathematical reasons and a good vehicle to produce new examples of complete K\"ahler manifolds, manifolds with special holonomy and Einstein metrics. Moreover Sasakian manifolds play a role in the study of orbifolds since many K\"ahler orbifolds can be desingolarised by using Sasakian spaces (see e.g. \cite{} and the references therein). 

%An active field in Sasakian geometry is the seek of special or canonical Sasakian metrics. In this -.......

%In \cite{BGS,BGS2} there are introduced {\em extremal Sasakian} metrics as critical points of a Sasakian version of the Calabi functional \eqref{calabifunctional}. Examples of these metrics are provided in \cite{Boyer,BT-F,BT-F2}. 
% 
The main result of the paper is the following 

\begin{theorem}\label{main} Let $(M,\xi,\Phi,\eta,g)$ be a $(2n+1)$-dimensional Sasakian  manifold and let $\chi$ be a transverse K\"ahler form on $M$. Then the functional $J_{\chi}\colon \mathcal H_0\to \R$ 
%\begin{itemize}
%\item[($1$)]
has at most one critical point and the Sasaki $J$-flow has a long-time solution $f$ for every initial datum $f_0$.  Furthermore, the length of any smooth curve in $\mathcal{H}_0$ and the distance between any two points decrease under the flow and when the transverse holomorphic bisectional curvature of $\chi$ is nonnegative, $f$ converges to a critical point of $J_{\chi}$ in $\mathcal H_0$.%, i.e. such that $d\eta_f$ satisfies $\chi\wedge (d\eta_f)^{n-1}=c (d\eta_f)^{n}$ (in local coordinates $(g_f^T)^{\bar j k}\chi_{k\bar j}=nc$).
\end{theorem}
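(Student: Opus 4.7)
The strategy is to transpose Chen's argument in the K\"ahler setting \cite{chen} to the transverse K\"ahler geometry of a Sasakian manifold. All the relevant objects, namely $\chi$, the evolving transverse K\"ahler form $\omega_f^T = \frac{1}{2}d\eta+dd_B^cf$, and the potential $f\in\H_0$, are basic with respect to the Reeb foliation, so that every step can be phrased in terms of transverse operators that locally reduce to K\"ahler ones on a foliation chart. This is the source of the formal analogy with the K\"ahler case, but it also forces one to check that every analytic step is compatible with the Reeb direction.

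For the uniqueness of a critical point I would compute the second variation of $J_{\chi}$ along a smooth geodesic $\{f_t\}$ in $\H_0$ for the Riemannian structure on $\H$ studied in \cite{guanzhang,he}. The transverse geodesic equation has the same shape as the K\"ahler Mabuchi geodesic equation, and substituting it into $\frac{d^2}{dt^2}J_{\chi}(f_t)$ should produce a pointwise nonnegative integrand in $\chi$ and $\omega_{f_t}^T$, giving convexity of $J_{\chi}$ along geodesics. Strict convexity modulo constants, combined with the normalisation condition defining $\H_0$, then rules out two distinct critical points.

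Long-time existence is the main analytic core, and I expect the principal obstacle to lie here. Short-time existence follows from standard parabolic theory for the basic quasilinear equation $\dot f = c_{\chi}-\Lambda_{\omega_f^T}\chi$. To push the solution to all times I would establish a priori estimates in stages: the decrease of $J_{\chi}$ along the flow controls $f$ in $L^1$ modulo constants; a maximum principle applied to $\dot f$ yields a $C^0$ bound; a Yau--Aubin type computation on a quantity of the form $\log\Lambda_{\omega_f^T}\chi - Af$, with $A$ chosen large enough to absorb a negative transverse curvature term, produces the $C^2$ estimate; higher regularity then follows from a transverse Evans--Krylov step and parabolic Schauder theory for basic functions. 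The delicate point is to verify that every cutoff, maximum principle and integration by parts really descends to the transverse K\"ahler quotient and preserves the normalisation of $\H_0$.

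Once these estimates are in place the remaining two assertions follow more formally. For the distance-decreasing property I would consider a smooth family $f_0(s)$ of initial data, evolve each under the flow, and differentiate $\|\partial_s f(t,s)\|_{L^2(M)}$ in $t$; the convexity of $J_{\chi}$ together with the gradient flow structure forces this derivative to be nonpositive, so the $L^2$-length of any smooth path is nonincreasing, and taking infima gives the distance estimate on $\H_0$. Under the assumption that $\chi$ has nonnegative transverse holomorphic bisectional curvature, a transverse Bochner formula for $u=\Lambda_{\omega_f^T}\chi$ produces a parabolic inequality $(\partial_t-L_f)u\le 0$ for the linearised operator $L_f$, where the dangerous curvature term is killed by the sign assumption on $\chi$. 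Combined with the uniform estimates from the previous step and the monotone decay of $\|\dot f\|_{L^2}$ coming from the gradient flow structure, this yields smooth convergence of $f$ to a critical point of $J_{\chi}$ in $\H_0$.
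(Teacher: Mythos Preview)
Your outline tracks the paper's strategy in spirit, but there are two places where it would actually break down.

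\medskip
\textbf{Short-time existence.} You say it ``follows from standard parabolic theory for the basic quasilinear equation''. It does not: the operator $f\mapsto c-\sigma_f$ is only elliptic in the transverse directions and is completely degenerate along the Reeb field $\xi$, so the equation is not parabolic on $M$. The paper handles this with the trick of Smoczyk--Wang--Zhang: one replaces the flow by the genuinely parabolic equation $\dot f=c-\xi^2(f)-\sigma_f$, shows that $\xi(f)$ itself satisfies a parabolic equation, and concludes that basic initial data stay basic, so the extra term vanishes. Without something of this sort your short-time step has a gap.

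\medskip
\textbf{Wrong trace in the $C^2$ estimate and in the convergence step.} You propose to run a Yau--Aubin computation on $\log\Lambda_{\omega_f^T}\chi-Af$ and, under the curvature hypothesis, a parabolic inequality for $u=\Lambda_{\omega_f^T}\chi$. But $\Lambda_{\omega_f^T}\chi=\sigma_f$ is exactly the quantity that satisfies $\partial_t\sigma_f=\tilde\Delta_f\sigma_f$ and is therefore bounded above for free by the scalar maximum principle; no curvature assumption and no $-Af$ term are needed, and this bound only gives a \emph{lower} bound on $g_f$. The nontrivial $C^2$ estimate is for the reverse trace $\gamma_f=\Lambda_{\chi}\omega_f^T=\chi^{\bar jk}(g_f)_{k\bar j}$, which controls $g_f$ from above. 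The paper computes $(\partial_t-\tilde\Delta_f)\gamma_f$ in special foliated coordinates and obtains a bound $\gamma_f\le C\,e^{Ct}$; this is time-dependent but suffices to rule out finite-time blow-up. For convergence, the nonnegative transverse bisectional curvature of $\chi$ is used not on $\sigma_f$ but in a \emph{tensor} maximum principle applied to $\kappa=\tfrac12 d\eta_f-C\chi$: the curvature sign makes the reaction term nonpositive, so $\kappa\le 0$ persists, yielding a time-independent upper bound on $g_f$ and hence uniform $C^\infty$ estimates. Your proposed Bochner inequality for $\sigma_f$ does not see the bisectional curvature at all and would not deliver the uniform bound.

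\medskip
A smaller point: for uniqueness you invoke smooth geodesics in $\mathcal H_0$, but their existence is open. The paper uses the $C^{1,1}$-geodesics of Guan--Zhang instead; the convexity computation survives because the second variation of $J_\chi$ along solutions of the $\epsilon$-regularised geodesic equation is still nonnegative.
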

The last sentence in the statement of Theorem \ref{main} implies that  if the transverse K\"ahler structure $\chi$ has nonnegative  transverse holomorphic bisectional curvature, then $J_{\chi}$ has a critical point in $\mathcal H_0$. We remark that Sasakian manifolds having nonnegative  transverse holomorphic bisectional curvature are classified in \cite{HeSun}, but in the definition of the Sasaki $J$-flow, $\chi$ is just a transverse K\"ahler structure not necessarily induced by a Sasaki metric. 

From the local point of view, a  solution to the {\em Sasaki $J$-flow} can be seen as a collection of
solutions to the K\"ahler $J$-flow on open sets in $\C^n$. This fact allows us to use all the local estimates about the K\"ahler $J$-flow provided in \cite{chen}. What is necessary modifying from the K\"ahler case is the proof of the existence of a short-time solution to the flow (since the flow is parabolic only along transverse directions) and the 
 global estimates. The short-time existence is obtained in Section \ref{wellposed} by using a trick introduced in \cite{smoczyk}, while the global estimates are obtained by using a transverse version of the maximal principle for transversally elliptic operators (see section \ref{maxprin}). 

\medskip
\noindent {\em{Acknowledgements}}. The authors would like to thank Valentino Tosatti for useful comments and remarks. 

\section{Preliminaries}\label{preliminaries}
In this section we recall some basic facts about Sasakian Geometry declaring the notation which will be adopted in the rest of the paper.
  
%\subsection{Transverse curvatures of a Sasakian manifold} 
Let $(M,\xi,\Phi,\eta,g)$ be a $(2n+1)$-dimensional Sasaki manifold. Then the Reeb vector field $\xi$ specifies a Riemannian foliation on $M$, which is usually denoted by $\mathcal F_{\xi}$, and the vector bundle to $M$ splits in  $TM=\mathcal D\oplus L_{\xi}$, where $L_{\xi}$ is the line bundle generated by $\xi$ and  $\mathcal D$ has as fiber over a point $x$ the vector space $\ker \eta_x$.  The metric $g$ splits accordingly in $g=g^T+\eta^2$, where the degenerate tensor $g^T$ is called the {\em transverse metric} of the Sasakian structure. In the following we denote by $\nabla^T$ the {\em transverse Levi-Civita} connection defined on the bundle $\mathcal D$ in terms of the Levi-Civita connection ${\rm D}$ of $g$ as   
\b\label{nablaT}
\nabla^T_XY=
\begin{cases}
{\rm D}_XY\quad\mbox{ if } X\in \Gamma(\mathcal D)\\
[\xi,Y]^{\mathcal D} \quad\mbox{ if } X=\xi\,,
\end{cases}
\e
where the upperscript $\mathcal D$ denotes the orthogonal projection onto $\mathcal D$. This connection induces the transverse curvature 
\b\label{RT}
R^T(X,Y)Z=\nabla_X^T\nabla^T_YZ-\nabla_Y^T\nabla^T_XZ-\nabla^T_{[X,Y]}Z,
\e
and the transverse Ricci curvature ${\rm Ric}^T$ obtained as the trace of the map $X\mapsto R^T(X,\cdot)\cdot$ on $\mathcal D$ with respect to $g^T$. We further recall that a real $p$-form $\alpha$ on $(M,\xi,\Phi,\eta,g)$ is called {\em basic} if 
$$
\iota_{\xi}\alpha=0\,,\quad \iota_{\xi}d\alpha=0,
$$
where $\iota_{\xi}$ denotes the contraction along $\xi$. The set of basic $p$-forms is usually denoted by $\Omega_{B}^p(M)$ and $\Omega_B^0(M)=C^{\infty}_B(M)$. Since the exterior differential operator takes basic forms into basic forms, its restriction $d_B$ to $\Omega_B(M)=\oplus \Omega_B^p(M)$ defines a cohomological complex. Moreover, $\Phi$ induces a {\em transverse complex structure} $J$ on $(M,\xi)$ and a splitting of the space of complex basic forms in forms of type $(p,q)$ in the usual way. Furthermore, the complex extension of $d_B$ to $\Omega_B(M,\C)$  splits as $d_B=\partial_B+\bar\partial_B$ and $\bar\partial_B^2=0$
(see e.g. \cite{bg} for details). A basic $(1,1)$-form $\chi$ on  $(M,\xi,\Phi,\eta,g)$ is said to be {\em positive} if 
\b\label{positive}
\chi(Z,\bar Z)>0,
\e 
for every non-zero section $Z$ of $\Gamma(\mathcal D^{1,0})$.  If further $\chi$ is closed, we refer to $\chi$ as to a {\em transverse K\"ahler form}. Note that condition \eqref{positive} depends only on the transverse complex structure $J$ and  on $\xi$, since $\chi$ is basic. 
Every such a $\chi$ induces the global metric 
$$
g_{\chi}(\cdot,\cdot)=\chi(\cdot,\Phi\cdot)+\eta(\cdot)\,\eta(\cdot),
$$
on $M$. The metric $g_{\chi}$ induces a transverse Levi-Civita connection $\nabla^{\chi}$ and a tranverse curvature $R^{\chi}$ as in \eqref{nablaT} and \eqref{RT} (here it is important that $\chi$ is basic in order to define $\nabla^{\chi}$).  
\subsection{Adapted coordinates}\label{foliatedcoordinates}
%Let $(M,g,\xi)$ be a Riemannian manifold with a unitary Killing vector filed. Then any point $x\in M$ we can always find coordinates $\{y^1,\dots,y^n,z\}$ around $x$ such that 
%$$
%\xi=\partial_x\,,\quad 
%$$
%
%
%
Let $(M,\xi,\Phi,\eta,g)$ be a Sasakian manifold.  We can always find  local coordinates
$\{z^1,\dots,z^{n},z\}$ taking values in $\C^n\times \R$ such that 
\b\label{coord}
\xi=\partial_z\,,\quad 
\Phi(d{z^j})=i\,d{z^j},\quad \Phi(d{\bar z^j})=-i\,d{\bar z^j}\,.
\e

A function $h$ is basic if and only if does not depend on the variable $z$ and we usually denote by 
$h_{,i_1\dots i_r\bar j_1\dots \bar j_l}$ the space derivatives of $h$ along $\partial_{z^{i_1}},\dots ,\partial_{z^{i_r}},
\partial_{\bar z^{ j_1}},\dots,\partial_{\bar z^{ j_l}}$. We denote by $A_{i_1\dots i_r\bar j_1\dots \bar j_l}$ (without \lq\lq ,\rq\rq) the components of the basic tensor $A$. Furthermore, when a function $f$ depends also on a time variable $t$, we use notation 
$\dot f$ to denote its time derivative.  In the case when $f$ depends on two time variables $(t,s)$, we write $\partial_t f$ and $\partial_s f$, to distinguish the two derivatives. 
 
For instance, the metric $g$  and the transverse symplectic form $d\eta$ locally write as 
$$
g=g_{i\bar j}dz^id\bar z^{j}+\eta^2\,,\quad d\eta=2ig_{i\bar j}dz^i\wedge d\bar z^{j},
$$
where the $g_{i\bar j}$ are all basic functions. In particular the transverse metric $g^T$ writes as 
$g^T=g_{i\bar j}dz^id\bar z^{ j}$ and a Sasakian structure can be regarded as a collection of K\"ahler structures each one defined on an open set of $\C^n$.  Observe that conditions \eqref{coord} depend only $(\xi,J)$ and therefore they hold for every Sasakian structure compatible with $(\xi,J)$. 
This fact is crucial in the proof of Theorem \ref{main}.  
%Once a Sasakian structure is fixed, the choice of the coordinates can be improved by requiring  
%$$
%\eta=dz-i h_jdz^j+i h_{\bar j}d\bar z^j,\quad 
%g=\eta\otimes \eta+2h_{j\bar l}d z^jd\bar z^l,
%$$
%$$
%\Phi=i\left(\partial_{z^j}+ih_j\partial_z\right)\otimes dz^j-i\left(\partial_{\bar z^j}-ih_{\bar j}\partial_z\right)\otimes d\bar z^j,
%$$
%for a suitable basic function $h$, where with the subscript on $h$, we denote the derivative with respect to the complex coordinates. Moreover, $\mathcal D^{1,0}$ is spanned by 
%$$
%X_j:=\partial_j+ih_j\xi,
%$$
%and since $h$ is basic, we have:
%$$
%[Y_j,Y_l]=[Y_{\bar j},Y_{\bar l}]=[\xi,Y_{\bar j}]=[\xi, Y_{\bar l}]=0,
%$$
%and
%$$
%[Y_j,Y_{\bar l}]=-2ih_{j\bar l}\xi.
%$$
%

In this paper we make sometimes use of {\em special foliated coordinates} with respect to a transverse K\"ahler form $\chi$. Indeed, once a transverse K\"ahler form $\chi$ on the Sasakian manifold $(M,\xi,\Phi,\eta,g)$ is fixed, we can always find foliated coordinates $\{z^1,\dots,z^{n},z\}$ around any fixed point $x$ such that if $\chi=\chi_{i\bar j}\,dz^i\wedge d\bar z^j$,  then 
$$
\chi_{i\bar j}=\delta_{ij}\,,\quad \partial_{z^r}\chi_{i\bar j}=0\,,\mbox{ at }x\,. 
$$
Moreover, we can further require that the transverse metric $g^T$ takes a diagonal expression at $x$. 
%*********COORDIANTE SEPCIALI PER $\chi$
%Finally, in analogy to the K\"ahler case, we can improve the behaviour of the coordinates at the fixed point $x$
%by requiring 
%$$
%\eta=dz\,,\quad g_{r\bar s}=\delta_{rs} \mbox{ at }x. 
%$$
%When this happens we refer to  the coordinates as to {\em special coordinates}.
%
%

%\medskip 
%Once foliated complex coordinates are fixed 
\subsection{The space of the Sasakian potentials and the definition of $J$-flow} Following \cite{bgs,guanzhang,guanzhangA,he}, given a Sasakian manifold $(M,\xi,\Phi,\eta,g)$, we consider 
%denote by ${\mathcal H}$ the subset in $C^{\infty}(M,\R)$
$$
{\mathcal{H}}=\{ h \in C_B^{\infty}(M,\R)\,\,:\,\, \eta_h=\eta+ d^c h\ \mbox{is a contact form}\}\,,
$$
where $d^ch$ is the $1$-form on $M$ defined by $(d^ch)(X)=-\frac12 dh(\Phi(X))$. Every $h\in {\mathcal{H}}$ induces the Sasakian structure
$(\xi,\Phi_h,\eta_h,g_h)$ where 
$$\begin{aligned}
\Phi_h=\Phi-(\xi\otimes (\eta_h-\eta))\circ \Phi\,,\quad 
g_h=\frac12\,d\eta_h\circ({\rm Id}\otimes \Phi_h)+\eta_h\otimes\eta_h\,.
\end{aligned}
$$
Notice that 
$$
\eta_h\wedge (d\eta_h)^n=\eta \wedge (d\eta_h)^n\,.
$$ 
%The quadruple $(\xi,\Phi_h,\eta_h,g_h)$ is not Sasakian unless $h$ is a basic function. For this reason the space of Sasakian potential $\mathcal{H}$ is defined as 
%$$
%\mathcal{H}=\tilde{\mathcal H}\cap C^{\infty}_B(M,\R)\,. 
%$$
%
%In analogy to the K\"ahler setting,  the {\em Sasakian cone} $\mathcal{H}$ of $(M,\xi,\Phi,\eta,g)$ is defined by 
%$$
%\mathcal{H}=\{ f \in C^{\infty}_B(M,\R)\,\,:\,\, \eta_f=\eta+ d^c_B f\ \mbox{is a contact form}\}\,,
%$$
%where $d^c_B=\frac12 i (\bar \partial_B- \partial_B)$, i.e.  
%$$
%(d^c_Bf)(X)=-df(\Phi X)\,. 
%$$
%
%Every $f\in \mathcal K$ induces the Sasakian structure $(\xi,\Phi_f,\eta_f,g_f)$ where 
%$$\begin{aligned}
%\Phi_f\,&\,=\Phi-(\xi\otimes (\eta_f-\eta)\circ \Phi\\
%g_f&\,=d\eta_f\circ({\rm Id}\otimes \Phi_f)+\eta_f\otimes\eta_f\,.
%\end{aligned}
%$$
%
All the Sasakian structures induced by  the functions in $\mathcal H$ have the same Reeb vector field and the same transverse complex structure. It is rather natural to restrict our attention to the space of $\mathcal{H}_0$ of {\em normalized } Sasakian potentials. $\mathcal H_0$ is defined as the zero set of the functional  
 $I\colon \mathcal H\to \R$ defined trough its first variation by 
$$
\frac{\partial}{\partial t}I(f)=\frac{1}{2^nn!}\int_M \dot{f}\,\eta\wedge d\eta_{f}^n\,,\quad I(0)=0,
$$
where $f$ is a smooth curve in $\mathcal H$
(see \cite[formula (14)]{guanzhang} for an explicit formulation of $I$).
% and parametrises the set of Sasakian structures having as Reeb vector field $\xi$ and a fixed transverse complex structure $J$.  
 The pair $(\xi,J)$ can be seen as a {\em polarisation} of the Sasakian manifold (see \cite{bgs}). Notice that $\mathcal{H}$ is open in  $C^{\infty}_B(M,\R)$ and has the natural Riemannian metric
\begin{equation}\label{metric}
(\varphi,\psi)_h:=\frac{1}{2^nn!}\int_M\, \varphi \psi\,\eta \wedge (d\eta_h)^n\,.
\end{equation}
%studied in \cite{guanzhang} where it is computed the covariant derivative is computed.  
The covariant derivative of \eqref{metric} along a smooth curve $f=f(t)$ in $C^{\infty}_B(M,\R)$ takes the following expression    
$$
D_t\psi=\dot\psi-\frac14\,\langle d_B\psi,d_B\dot f\rangle_f,
$$
where $\psi$ is an arbitrary smooth curve in $C^{\infty}_B(M,\R)$ and $\langle \cdot ,\cdot\rangle _f$ is the pointwise scalar product induced by $g_f$ on basic forms (see \cite{guanzhang,he}). Note that $D_t$ can be alternatively written as
$$
D_t\psi=\dot\psi-\frac12{\rm Re} \langle \partial_B\psi,\partial_B\dot f\rangle_f
$$
which has the following local expression
$$
D_t\psi=\dot\psi-\frac14g_f^{\bar jk}(\psi_{,k}\dot f_{,\bar j}+\psi_{,\bar j}\dot f_{,k})\,.
$$
%\dot\psi-\frac12\mathrm{Re}\, \langle \partial\psi,\partial\dot f\rangle_f
Moreover, a curve $f=f(t)$ in $\mathcal H$ is a geodesic if and only if it solves
 \begin{equation}\label{geodesic}
\ddot f-\frac14 |d_B \dot f|^2_f=0\,.
%{\color{blue}\quad(=\ddot f-\frac12 |\partial_B \dot f|^2_f)}.
\end{equation}
%
%In \cite{guanzhang} P. Guan and X. Zhang study the geodesic equation in $\mathcal{H}$ proving  that $\mathcal{H}$ is a metric space having constant transversal scalar curvature.
%In \cite{guanzhang} P. Guan and X. Zhang proved that $\mathcal H$ has nonpositive sectional curvature and that $\mathcal H_0$ is totally geodesic and totally convex. Moreover when the first basic Chern class is nonpositive, the metric having constant scalar curvature  K-energy if the first basic Chern class is nonpositive.
Furthermore, W. He proved in \cite{he} that $\mathcal H$ is an infinite dimensional symmetric space whose curvature can be written as
$$
R_h(\psi_1,\psi_2)\psi_3=-\frac1{16}\{\{\psi_1,\psi_2\}_f,\psi_3\}_h,
$$
where $\{\,,\,\}_h$ is the Poisson bracket on $C^{\infty}_B(M,\R)$ induced by the contact form $\eta_h$. 

As in the K\"ahler case, it is still an open problem to establish when two points in $\mathcal H$ can be connected by a geodesic path. Fortunately, Guan and Zhang proved in \cite{guanzhangA} that this can be always  done in a weak sense. More precisely, the role of $\mathcal H$ 
is replaced with its completion $\bar{\mathcal{H}}$ with respect to the $C_{w}^2$-norm (see \cite{guanzhangA} for details) and the geodesic equation \eqref{geodesic} with 
\b\label{wgeodesic}
\left(\ddot f-\frac14 |d_B \dot f|^2_f\right)\,\eta\wedge d\eta_f^n=\epsilon\,\eta\wedge d\eta^n\,.
\e
Then, by definition a $C^{1,1}$-geodesic is a curve in $\bar {\mathcal H}$ obtained as weak limit of solutions to \eqref{wgeodesic}, and from \cite{guanzhangA} it follows that for every 
two points in $\mathcal H$ there exists a $C^{1,1}$-geodesic connecting them. 

\medskip 
%
%The above set up and He's theorem should be compared with T. Mabuchi \cite{mabuchi} and S. Donaldson's \cite{donaldsonNC} works in the K\"ahler case.
%

%\begin{equation}
%\begin{split}
%(D_t\psi_1,\psi_2)_f+(\psi_1,D_t\psi_2)_f=&(\dot\psi_1-\mathrm{Re}\, \langle \partial\psi_1,\partial\dot f\rangle_f,\psi_2)_f+(\psi_1,\dot\psi_2-\mathrm{Re}\, \langle \partial\psi_2,\partial\dot f\rangle_f)_f\\
%=&(\dot\psi_1,\psi_2)_f+(\psi_1,\dot\psi_2)_f-\frac12(\psi_2\partial\psi_1,\partial\dot f)_f-\frac12(\partial\dot f,\psi_2\partial\psi_1)_f-\frac12(\psi_1\partial\psi_2,\partial\dot f)_f-\frac12(\partial\dot f,\psi_1\partial\psi_2)_f\\
%=&(\dot\psi_1,\psi_2)_f+(\psi_1,\dot\psi_2)_f-\frac12(\partial(\psi_1\psi_2),\partial\dot \varphi)_f-\frac12(\partial\dot f,\partial(\psi_1\psi_2))_f\\
%=&(\dot\psi_1,\psi_2)_f+(\psi_1,\dot\psi_2)_f-\frac12(\psi_1\psi_2,\partial^*\partial\dot f)_f-\frac12(\psi_1\psi_2,\bar\partial^*\bar\partial\dot f)_f\\
%=&(\dot\psi_1,\psi_2)_f+(\psi_1,\dot\psi_2)_f+(\psi_1\psi_2,\Delta_f\dot f)_f\\
%\end{split}\nonumber
%\end{equation}

Now we can introduce the Sasakian version of the $J$-flow.  The definition depends on the choice of a transverse K\"ahler form $\chi$. Note that  
$$
 \eta_h\wedge \chi^n=\eta\wedge \chi^n\neq 0,
$$
for every $h\in \ \mathcal{H}$, since $\chi$ and $d^c_Bh$ are both basic forms. 
%For instance we could fix $\tilde f\in \mathcal{H}$ and then define $\tilde \chi$ as $\tilde \chi=d\eta+d_Bd^c_B\tilde f$.  

\begin{prop}\label{wp}
Let $f_0,f_1\in  \mathcal{H}$ and $f\colon [0,1]\to\mathcal{H}
$ be a smooth path satisfying $f(0)=f_0$, $f(1)=f_1$. Then  
$$
A_{\chi}(f):=\int_0^1\int_M \dot f\,\chi\wedge \eta\wedge (d\eta_f)^{n-1}\,dt,
$$
depends only on $f_0$ and $f_1$.
\end{prop}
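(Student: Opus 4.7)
\medskip

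\noindent\textbf{Proof plan for Proposition \ref{wp}.} The plan is to show that $A_\chi$ is path-independent by taking a smooth two-parameter family $f=f(t,s)\in\mathcal{H}$ with fixed endpoints $f(0,s)\equiv f_0$, $f(1,s)\equiv f_1$, and proving $\tfrac{d}{ds}A_\chi(f(\cdot,s))=0$.

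The first observation I would exploit repeatedly is that the foliation $\mathcal{F}_\xi$ has $1$-dimensional leaves, so the transverse space is $2n$-dimensional and every basic form on $M$ of degree $\geq 2n+1$ vanishes identically. In particular, for any basic $1$-form $\alpha$ (such as $d^c h$ for $h\in C^\infty_B(M)$), $\chi\wedge\alpha\wedge(d\eta_f)^{n-1}=0$. Two immediate consequences are: $\chi\wedge\eta\wedge(d\eta_f)^{n-1}=\chi\wedge\eta_f\wedge(d\eta_f)^{n-1}$ (since the difference involves $d^c f$), and the term that would appear from varying $\eta_f$ in the expression disappears.

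Using $\partial_s(d\eta_f)=dd^c f_s$ and writing $f_t=\partial_t f$, $f_s=\partial_s f$, the variation becomes
$$
\frac{d}{ds}A_\chi=\int_0^1\!\!\int_M\bigl[(\partial_t f_s)\,\chi\wedge\eta_f\wedge(d\eta_f)^{n-1}+(n-1)\,f_t\,\chi\wedge\eta_f\wedge dd^c f_s\wedge(d\eta_f)^{n-2}\bigr]\,dt,
$$
where I used $\partial_s f_t=\partial_t f_s$ together with the vanishing noted above to kill the $\chi\wedge d^c f_s\wedge(d\eta_f)^{n-1}$ contribution. Integrating the first term by parts in $t$ (the boundary term vanishes because $f_s=0$ at $t=0,1$) reduces the claim to the pointwise-in-$t$ symmetry
$$
\int_M\bigl[f_t\,dd^c f_s-f_s\,dd^c f_t\bigr]\wedge \chi\wedge\eta_f\wedge(d\eta_f)^{n-2}=0.
$$

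The remaining and essentially only nontrivial step is this symmetry, which I would establish by Stokes. Set $A=\chi\wedge\eta_f\wedge(d\eta_f)^{n-2}$; then $dA=\chi\wedge(d\eta_f)^{n-1}$. For any basic function $g$, $d(f_t\,d^c g\wedge A)=(df_t\wedge d^c g+f_t\,dd^c g)\wedge A-f_t\,d^c g\wedge dA$, and the last piece vanishes as a basic $(2n+1)$-form. Integrating over $M$ gives $\int_M f_t\,dd^c g\wedge A=-\int_M df_t\wedge d^c g\wedge A$. Applying this with $g=f_s$, and also with the roles of $s,t$ interchanged, reduces everything to showing $\int_M(df_t\wedge d^c f_s-df_s\wedge d^c f_t)\wedge A=0$. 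In local foliated coordinates (Section \ref{foliatedcoordinates}), for basic functions both $df_t\wedge d^c f_s$ and $df_s\wedge d^c f_t$ have the same transverse $(1,1)$-part, namely $\tfrac{i}{2}(\partial_B f_t\wedge\bar\partial_B f_s+\partial_B f_s\wedge\bar\partial_B f_t)$; their $(2,0)$ and $(0,2)$ parts wedge with the basic $(n-1,n-1)$-form $\chi\wedge(d\eta_f)^{n-2}$ to produce basic forms of bidegree $(n+1,n-1)$ or $(n-1,n+1)$, which are zero in transverse complex dimension $n$. This closes the argument.

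The main place where one must be careful is the bookkeeping of what is basic and what is not: $\chi$, $d^c h$, $dd^c h$, and $d\eta_f$ are basic while $\eta$ and $\eta_f$ are not, and the proof crucially trades $\eta$ for $\eta_f$ precisely so that $\partial_s(\eta+d^c f)=d^c f_s$ is basic and the dimensional vanishing of basic $(2n+1)$-forms can be invoked. I do not anticipate further obstacles.
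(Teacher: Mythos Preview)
Your argument is correct and is essentially the same idea as the paper's proof: both reduce path-independence to the symmetry $\int_M u\,dd^c v\wedge\chi\wedge\eta\wedge(d\eta_f)^{n-2}=\int_M v\,dd^c u\wedge\chi\wedge\eta\wedge(d\eta_f)^{n-2}$, established by an integration by parts on $M$ together with the vanishing of basic forms of transverse bidegree $(n+1,n-1)$ or $(n-1,n+1)$. The packaging differs slightly: the paper follows Mabuchi's trick, setting $\psi(s,t)=s f(t)$, forming the $1$-form $\Psi$ on the square $Q=[0,1]^2$ whose $dt$- and $ds$-components are the integrands, checking $d\Psi=0$ via $\partial_B,\bar\partial_B$ integration by parts, and invoking Stokes on $Q$; you instead take a general homotopy with fixed endpoints and show $\tfrac{d}{ds}A_\chi=0$ directly. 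Mabuchi's version has the minor advantage of simultaneously producing an explicit primitive (the $t=0,1$ boundary pieces depend only on $f_0,f_1$), while your version is a touch more self-contained in that it never names the auxiliary $1$-form; but the analytic content is identical.
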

\begin{proof}
Following the approach of Mabuchi in \cite{mabuchiF},
let $\psi(s,t):=sf(t)$ and let $\Psi$ be the $2$-form on the square  $Q=[0,1]\times [0,1]$ defined as 
$$
\Psi(s,t):=\left(\int_M \partial_t\psi\, \eta\wedge\chi\wedge (d\eta_\psi)^{n-1}\right)\,dt+ 
\left(\int_M  \partial_s\psi\, \eta\wedge\chi\wedge (d\eta_\psi)^{n-1}\right)\,ds\,.
$$
We show that $\Psi$ is closed as $2$-form on $Q$:
\begin{equation}
\begin{split}
d\Psi(s,t)=&\,\frac{d}{ds}\left( \int_M \partial_t\psi\, \eta\wedge\chi\wedge (d\eta_\psi)^{n-1}\right)\,dt\wedge ds-\frac{d}{dt}\left(\int_M  \partial_s\psi\, \eta\wedge\chi\wedge (d\eta_\psi)^{n-1}\right)\,dt\wedge ds\\
=&\,(n-1)\,s\,i\left( \int_M \dot f\, \eta\wedge\chi\wedge(d\eta_\psi)^{n-2}\wedge \partial_B \bar\partial_B f+\int_M  f\, \eta\wedge\chi\wedge (d\eta_\psi)^{n-2}\wedge\bar \partial_B \partial_B \dot f\right)dt\wedge ds\\
=&\,(n-1)\,s\,i\left[ \int_M d\left(\dot f\, \eta\wedge\chi\wedge(d\eta_\psi)^{n-2}\wedge  \bar\partial_B f\right)-\int_M\partial_B\dot f\, \eta\wedge\chi\wedge(d\eta_\psi)^{n-2}\wedge  \bar\partial_B f+\right.\\
&\left.+\int_M  d\left( f\, \eta\wedge\chi\wedge (d\eta_\psi)^{n-2}\wedge \partial_B \dot f\right)-\int_M  \bar \partial_B f\wedge \eta\wedge\chi\wedge (d\eta_\psi)^{n-2}\wedge \partial_B \dot f\right]dt\wedge ds\\
=&\,(n-1)\,s\,i\left[ \int_M\partial_B\dot f\wedge  \bar\partial_B f \wedge\eta\wedge\chi\wedge(d\eta_\psi)^{n-2} -\int_M  \partial_B \dot f\wedge \bar \partial_B f\wedge \eta\wedge\chi\wedge (d\eta_\psi)^{n-2} \right]dt\wedge ds\\
=&\,0.
\end{split}\nonumber
\end{equation}
Therefore the Gauss-Green Theorem implies that
$$
\int_{\partial Q}\Psi=0,
$$
and the claim follows.
\end{proof}

In view of the last proposition, we can write $A_{\chi}(f_0,f_1)$ instead of $A_{\chi}(f)$. 
\begin{definition}
The {\em Sasaki $J$-functional} is the map $J_{\chi}\colon \mathcal{H}\to \R$ defined as 
$$
J_{\chi}(h)=\frac{1}{2^{n-1}(n-1)!} A_{\chi}(0,h)\,. 
$$
\end{definition}
Alternatively we can define $J_{\chi}$ through its first variation by
\b\label{ptJ} 
\partial_t J_{\chi}(f)=\int_M \frac{1}{2^{n-1}(n-1)!}\dot f\,\chi\wedge \eta\wedge (d\eta_f)^{n-1}\,,\quad J_{\chi}(0)=0,
\e
and then apply Proposition \ref{wp} to show that the definition is well-posed. 
%From the local point of view, the Sasakian structure $(\xi,\Phi,\eta,g)$ can be always written in terms of coordinates  $(x,z^1,\dots,z^n)$ taking value in $\R\times \C^{n}$ and a basic map $\phi$ as 
%$$
%\begin{aligned}
%\xi=\frac\partial{\partial x},\quad \eta_h=dx-i(h_jdz^j-h_{\bar j}d\bar z^j),
%\quad g=\eta\otimes \eta+2h_{j\bar k} dz^jd\bar z^k\,;\\
%g=2g_{j\bar k}^T dz^j d\bar z^k=2h_{j\bar k} dz^j d\bar z^k,\quad d\eta_h=2ih_{j\bar k} dz^j \wedge d\bar z^k
%\end{aligned}
%$$
%[SONO SBAGLIATE!!!!]
%moreover we can locally write $\chi=i\chi_{r\bar s}dz^r\wedge dz^{\bar s}$ and \eqref{ptJ} as 
%$$
%\partial_t J_{\chi}(f)=\int_M \dot f\sigma\, \eta\wedge d(\eta_f)^n=(\dot f,\sigma)_f\,,\quad J_{\chi}(0)=0
%$$
%where 
%$$
%\sigma_h=(g_h^T)^{\bar r s}\chi_{r\bar s}\,.
%$$
 Note that 
$$
\partial_t J_{\chi}(f)=\frac12 (\dot f\chi,d\eta)_f,
$$
and therefore
$$
\partial_t J_{\chi}(f)=\frac{1}{2^nn!} \int_M \dot f\sigma_f\,\eta\wedge d\eta_{f}^n,
$$
where 
for $h\in \mathcal {H}$
$$
\sigma_h=g^{\bar ba}_h\chi_{a\bar b} ,
$$
the components and the derivatives are computed with respect to transverse holomorphic coordinates and with the upper indices in $g_h$   we denote the components of the inverse matrix.

If we restrict $J_{\chi}$ to $\mathcal{H}_0,$ then $h\in \mathcal H_0$ is a critical point of $J_{\chi}\colon \mathcal{H}_0\to \R$
if and only if 
$$
\int_M k\,\eta\wedge \chi\wedge d\eta_h^{n-1}=0,
$$
for  every $k$ in  the tangent space to $\mathcal H_0$ at $h$, i.e. if and only if $2n \,\eta\wedge \chi\wedge d\eta_h^{n-1}=c\, \eta\wedge d\eta_h^{n}$, where 
\b\label{c}
c=\frac{2n\int_M \chi \wedge \eta\wedge d\eta^{n-1}}{\int_M \eta\wedge d\eta^n}\,. 
\e 
Given  $h\in\mathcal{H}_0$,  we can rewrite the condition of being a critical point of $J_{\chi}$ as 
 \b\label{criticaleq}
\sigma_h=c\,.
\e
Therefore,  if $f_0\in \mathcal H_0$ is fixed, the evolution equation 
\b\label{jflow}
\dot f=c-\sigma_f\,,\quad f(0)=f_0,
\e
can be seen as the gradient flow of $J_{\chi}\colon \mathcal{H}_0\to \R$.

\begin{definition}
A Sasakian structure $(\xi,\Phi_h,\eta_h,g_h)$ is called {\em critical} if $h$ satisfies \eqref{criticaleq}. We will refer to \eqref{jflow}
 as to the {\em Sasaki $J$-flow}. 
 \end{definition}
 
\section{Technical Results and Critical Sasaki Metrics}
Let $(M,\xi,\Phi,\eta,g)$ be a $(2n+1)$-dimensional compact Sasakian manifold and let 
$f=f(t)$ be a smooth curve in the space of normalized Sasakian potentials $\mathcal H_0$. %We denote by $\dot f$ the derivative of $f$ with respect to the variable $t$.  
Then  
\begin{equation}\label{vol}
\frac{\partial}{\partial t} \eta \wedge (d\eta_f)^n= \Delta_f\dot f\, \eta \wedge (d\eta_f)^n,
\end{equation}
%{\color{blue} il mezzo compare perch\'e $g=\frac12 d\eta$}\\
where  for $h\in \mathcal H_0$, $\Delta_h$  denotes the basic Laplacian 
%$$
%\Delta_h\psi=d_B^*d_B+d_Bd^*_B=-*d_B*d_B-d_B*d_B*=-*(\partial_B+\bar\partial_B)*(\partial_B+\bar\partial_B)-(\partial_B+\bar\partial_B)*(\partial_B+\bar\partial_B)*
%$$
%$$
%=-2{\rm Re}(*\partial_B*\bar \partial_B)-2{\rm Re}(\partial_B*\bar \partial_B*)
%$$
%
$$
\Delta_h\psi=-\partial_B^*\partial_B\psi=\,g_h^{\bar jr}\psi_{,r\bar j}\,,\quad \mbox{ for }\psi\in C^{\infty}_B(M, \R)\,.
$$
A direct computation yields  
\begin{equation}\label{sigma_t}
\dot \sigma_f=-g_f^{\bar p m}\, \dot f_{,m\bar l}\,g_f^{\bar l q}\,\chi_{q\bar p}=-\langle i\partial_B\bar\partial_B \dot f,\chi\rangle_f,
\end{equation}
where, given $\alpha$ and $\beta$ in $\Omega_B^{(p,q)}(M,\C)$, we set 
$$
\langle \alpha,\beta\rangle_h=\alpha_{i_1\dots i_p\bar j_1\dots\bar j_q}\cdot \bar \beta_{r_1\dots r_p\bar s_1\dots\bar s_q} 
g^{\bar r_1i_1}_{h}\cdots g^{\bar r_pi_p}_{h}\cdot g_h^{\bar j_1 s_1}\cdots g_{h}^{\bar j_q s_q},
$$
and 
$$
(\alpha,\beta)_h=\frac{1}{2^nn!}\int_M \langle \alpha,\beta\rangle_h\,\eta\wedge d\eta_{h}^n\,. 
$$
In particular, if $\alpha=\alpha_i\,dz^i$ and $\beta=\beta_j\,dz^j$ are transverse forms of type $(1,0)$, 
 by writing $\chi=i\chi_{a\bar b}dz^a\wedge \bar z^b$, we have 
$$
\langle\chi,\alpha\wedge \bar \beta\rangle_h=i\chi_{a\bar b}\bar \alpha_{r}\beta_j g^{\bar r a}_hg^{\bar b j}_h\,.
$$
%and 
%$$
%\chi(\alpha,\beta)=\alpha_{i_1\dots i_p\bar j_1\dots\bar j_q}\cdot \bar \beta_{r_1\dots r_p\bar s_1\dots\bar s_q} 
%\chi^{\bar r_1i_1}\cdots \chi^{\bar r_pi_p}\cdot \chi^{\bar j_1 s_1}\cdots \chi^{\bar j_q s_q}\,.
%$$
\smallskip

The following technical lemma will be useful in the sequel.
\begin{lemma}\label{int1}
Let $u\in C_B^\infty (M,\mathds{R})$ and $f$ be a smooth path in $C^{\infty}_B(M,\R)$. Then
\begin{itemize}
\item[($i$)] $(\Delta_f\dot f,u\sigma)_f=-(\partial_B \dot f,\sigma\partial_B u)_f-(u\partial_B\dot f,\partial_B \sigma)_f;$

\vspace{0.1cm}
\item[($ii$)] $(\bar \partial_B\partial_B \dot f,u\chi)_f=-i(u\,\partial_B \dot f,\partial_B\sigma)_f-(\chi, \partial_B u  \wedge\bar\partial_B  \dot f)_f;$

\vspace{0.1cm}
\item[($iii$)] $(\dot f,\dot \sigma)_f=\frac12 (\partial_B (\dot f)^2,\partial_B\sigma)_f-i(\chi, \partial_B \dot f  \wedge\bar\partial_B  \dot f)_f. $
\end{itemize}
where $\sigma=g_f^{\bar kr}\chi_{r\bar k}$. 
\end{lemma}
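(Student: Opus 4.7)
The plan is to treat all three identities as integration-by-parts formulas for basic forms, working in the adapted holomorphic coordinates of Section \ref{foliatedcoordinates} so that the transverse structure of $g_f$ behaves like a K\"ahler metric and ordinary transverse integration by parts applies. The pivotal ingredient is the pointwise identity
\[
\langle \bar\partial_B\partial_B\dot f,\; u\chi\rangle_f = -i\,u\dot\sigma,
\]
which follows from formula (\ref{sigma_t}) by a direct local computation using the anti-commutation $\partial_B\bar\partial_B=-\bar\partial_B\partial_B$. The core of the argument is then identity (ii); (i) reduces to one line via Leibniz, and (iii) specializes (ii) to $u=\dot f$.

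For (i), I would use that $\Delta_f=-\partial_B^*\partial_B$ on basic functions with respect to $(\cdot,\cdot)_f$, so that $(\Delta_f\dot f, u\sigma)_f = -(\partial_B\dot f, \partial_B(u\sigma))_f$; the Leibniz rule $\partial_B(u\sigma)=u\,\partial_B\sigma+\sigma\,\partial_B u$, together with the fact that $u$ and $\sigma$ are real, then gives (i) at once.

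For (ii), the pointwise identity above reduces the LHS to $-i\int_M u\dot\sigma\,\eta\wedge d\eta_f^n/(2^n n!)$. I would next apply Stokes's theorem (transverse divergence theorem) to the $(0,1)$-vector field
\[
Z^{\bar l}=\dot f_{,m}\,u\,\chi_{q\bar p}\,g_f^{\bar pm}\,g_f^{\bar lq},
\]
whose basic divergence integrates to zero. Expanding $\partial_{\bar l}(Z^{\bar l}\det g_f)$ and using the K\"ahler identity $\partial_{\bar l}(g_f^{\bar lq}\det g_f)=0$ leaves three nontrivial contributions: the derivative landing on $\dot f_{,m}$ reconstructs $-\int u\dot\sigma\,\mathrm{dvol}_f$; the derivative landing on $u$ yields $-i(\chi,\partial_B u\wedge\bar\partial_B\dot f)_f$; and the remaining two pieces, coming from $\partial_{\bar l}\chi_{q\bar p}$ and $\partial_{\bar l}g_f^{\bar pm}$, should combine to reproduce $(u\partial_B\dot f,\partial_B\sigma)_f$ after exploiting $\bar\partial\chi=0$ and the K\"ahler symmetry $\partial_{\bar r}g_{c\bar d,f}=\partial_{\bar d}g_{c\bar r,f}$. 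Rearranging and multiplying through by $-i$ then yields (ii). Finally, specializing (ii) to $u=\dot f$: the LHS becomes $-i(\dot f,\dot\sigma)_f$ by the pointwise identity, and the Leibniz rule $\partial_B(\dot f^{\,2})=2\dot f\,\partial_B\dot f$ rewrites the first term on the RHS as $-\tfrac{i}{2}(\partial_B(\dot f^{\,2}),\partial_B\sigma)_f$; dividing by $-i$ produces (iii).

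The step I expect to be the main obstacle is the last piece of the integration by parts in (ii): the identification of the two remaining Stokes contributions (derivatives of $\chi$ and of the evolving inverse metric) with $g_f^{\bar rm}\sigma_{,\bar r}$ requires a careful relabelling of dummy indices and uses both $\bar\partial\chi=0$ and the K\"ahler symmetry of $g_f$. Once this identification is in place, the rest of the lemma is routine.
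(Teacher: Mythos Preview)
Your proposal is correct and follows essentially the same route as the paper. For (i) both you and the paper invoke $\Delta_f=-\partial_B^*\partial_B$ and Leibniz; for (ii) the paper writes the left-hand side in coordinates and integrates the $\bar b$-derivative by parts, using exactly the two ingredients you anticipate ($\bar\partial\chi=0$ and the transverse K\"ahler symmetry of $g_f$, which together give the identification with $\sigma_{,\bar c}$ and the cancellation of the inverse-metric contribution); and for (iii) the paper, like you, simply specialises (ii) to $u=\dot f$ via \eqref{sigma_t}. Your Stokes-on-$Z$ formulation is just a repackaging of the same integration by parts, and the ``main obstacle'' you flag is precisely the step the paper compresses between its second and third displayed lines.
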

\begin{proof}$\textrm{}$
\begin{itemize}
\item[($i$)]
$
(\Delta_f\dot f,\dot u\sigma)_f=-(\partial_B^*\partial_B\dot f,u \sigma)_f=-(\partial_B \dot f,\sigma\partial_B u)_f-(u\partial_B\dot f,\partial_B\sigma)_f.
$

\vspace{0.1cm}
\item[($ii$)] Since the Laplacian is self-adjoint we have:
\begin{equation}
\begin{split}
2^nn!i(\bar \partial_B\partial_B \dot f,u\chi)_f=&-\int_M ug_f^{\bar c j}g_f^{\bar b a}\dot f_{,j\bar b}\,\chi_{a\bar c}\,\eta\wedge(d\eta_f)^n\\
=&\int_M u_{,\bar b}g_f^{\bar c j}g_f^{\bar b a}\chi_{a\bar c}\,\dot f_{,j}\,\eta\wedge(d\eta_f)^n+\int_M ug_f^{\bar c j}g_f^{\bar b a}\chi_{a{\bar b},\bar c}\,\dot f_{,j}\,\eta\wedge(d\eta_f)^n\\
=&\int_M u_{,\bar b}g_f^{\bar c j}g_f^{\bar b a}\chi_{a\bar c}\,\dot f_{,j}\,\eta\wedge(d\eta_f)^n+\int_M ug_f^{\bar c j}\sigma_{,\bar c}\,\dot f_{,j}\,\eta\wedge(d\eta_f)^n\\
=&2^nn!(u\partial_B \dot f,\partial_B\sigma)_f-2^nn!i(\chi, \partial_B u\wedge\bar \partial_B \dot f)_f.
\end{split}\nonumber
\end{equation}

\vspace{0.1cm}
\item[($iii$)] By using \eqref{sigma_t} and $(ii)$, we have 
$$
\begin{aligned}
(\dot f,\dot \sigma)_f=&-i(\partial_B\bar \partial_B\dot f,\dot f\chi)_f=(\dot f\,\partial_B \dot f,\partial_B\sigma)_f-i(\chi, \partial_B \dot f  \wedge\bar\partial_B  \dot f)_f\\
=&\frac12 (\partial_B (\dot f)^2,\partial_B\sigma)_f-i(\chi, \partial_B \dot f  \wedge\bar\partial_B  \dot f)_f
\end{aligned}
$$

%\begin{equation}
%\begin{split}
%(\dot f,\dot \sigma)_f=&-\int_Mg_f^{\bar p m}\,g_f^{\bar l q}\,\dot f_{,m\bar l}\,\chi_{q\bar p}\,\dot f\, \eta\wedge(d\eta_f)^n\\
%=&\int_M g_f ^{\bar p m}\,g_f^{\bar l q}\,\dot f_{m}\,\chi_{q\bar p}\,\dot f_{\bar l}\, \eta\wedge(d\eta_f)^n+\int_M g_f^{\bar p m}\,g_f^{\bar l q}\,\dot f_{m}\,\chi_{q\bar p,\bar l}\,\dot f\, \eta\wedge(d\eta_f)^n\\
%=&-i(\chi,\partial_B \dot f\wedge \bar \partial_B \dot f)_f+\int_M g_f ^{\bar p m}\,\dot f_{m}\,\sigma_{\bar p}\,\dot f\, \eta\wedge(d\eta_f)^n\\
%=&-i(\chi,\partial_B \dot f\wedge \bar \partial_B \dot f)_f+\frac12(\partial_B \dot f^2,\partial_B \sigma)_f\,,
%\end{split}\nonumber
%\end{equation}
\end{itemize}
as required. 
\end{proof}

 The following proposition is about the uniqueness of {\em critical Sasaki metrics} in $\mathcal H_0$ and it is analogue to the K\"ahler case.  
\begin{prop}\label{stimevarie}
$J_{\chi}\colon \mathcal{H}_0\to \R$ has at most one critical point. 
\end{prop}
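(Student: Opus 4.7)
The plan is to adapt Chen's maximum-principle argument from the Kähler setting. Assume that $h_0,h_1\in\mathcal H_0$ both satisfy \eqref{criticaleq}, so that $\sigma_{h_0}=\sigma_{h_1}=c$, and set $u:=h_1-h_0\in C^{\infty}_B(M,\R)$. Connecting $h_0$ and $h_1$ by the affine interpolation $h_s:=h_0+su$, $s\in[0,1]$, the form $d\eta_{h_s}=(1-s)\,d\eta_{h_0}+s\,d\eta_{h_1}$ is a convex combination of two transverse Kähler forms, so $h_s\in\mathcal H$ and $g_{h_s}$ is transversally positive throughout.

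The first step is to write the vanishing of $\sigma_{h_1}-\sigma_{h_0}$ as a homogeneous linear equation in $u$. Since $u=\partial_s h_s$ does not depend on $s$, integrating formula \eqref{sigma_t} gives
\[
0=\sigma_{h_1}-\sigma_{h_0}=-\int_0^1 g_{h_s}^{\bar p m}\,u_{,m\bar l}\,g_{h_s}^{\bar l q}\,\chi_{q\bar p}\,ds=-\,a^{m\bar l}u_{,m\bar l},\qquad a^{m\bar l}:=\int_0^1 g_{h_s}^{\bar p m}g_{h_s}^{\bar l q}\chi_{q\bar p}\,ds.
\]
The second step is to verify that $L:=a^{m\bar l}\partial_m\partial_{\bar l}$ is transversally elliptic with no zeroth-order term. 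At every $x\in M$ and every $s$, simultaneously diagonalising $g_{h_s}$ and $\chi$ (possible because both define positive-definite Hermitian forms on $\mathcal D^{1,0}_x$) exhibits $g_{h_s}^{\bar p m}g_{h_s}^{\bar l q}\chi_{q\bar p}$ as a positive-definite Hermitian matrix in $(m,\bar l)$; averaging in $s$ preserves this positivity.

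Since $u$ is basic and $Lu\equiv 0$, the transverse strong maximum principle developed in Section \ref{maxprin} forces $u$ to be constant on $M$. It remains to rule out a nonzero constant via the normalisation $I(h_0)=I(h_1)=0$: along the straight line $s\mapsto h_0+sc_0$ one has $\eta_{h_0+sc_0}=\eta_{h_0}$ (since $d^c$ annihilates constants), so $d\eta_{h_0+sc_0}^n$ is independent of $s$ and the defining first variation of $I$ gives $I(h_0+c_0)-I(h_0)=\frac{c_0}{2^n n!}\int_M\eta\wedge d\eta_{h_0}^n$. The integral on the right is strictly positive and the left-hand side vanishes, whence $c_0=0$ and $h_0=h_1$. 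The only nontrivial external input is the transverse strong maximum principle from Section \ref{maxprin}; all other steps reduce to linear algebra and the elementary computation of $\dot\sigma$ already recorded in \eqref{sigma_t}.
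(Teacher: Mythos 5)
Your argument is correct in substance but follows a genuinely different route from the paper. The paper proves uniqueness by convexity along weak geodesics: it computes the second variation of $J_{\chi}$, shows that $\partial_t^2 J_{\chi}(f)\geq -i(\chi,\partial_B\dot f\wedge\bar\partial_B\dot f)_f\geq 0$ along solutions of the modified geodesic equation \eqref{wgeodesic}, and then invokes the Guan--Zhang existence of a $C^{1,1}$-geodesic joining the two critical points. Your linearisation along the affine path $h_s=h_0+su$ replaces all of that by a single homogeneous, second-order, transversally elliptic equation $a^{m\bar l}u_{,m\bar l}=0$ with positive-definite coefficients and no zeroth-order term; this avoids the (nontrivial) weak geodesic machinery entirely and is closer in spirit to the standard uniqueness argument for Monge--Amp\`ere-type equations. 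The computation of $a^{m\bar l}$ from \eqref{sigma_t}, its positivity, and the final normalisation step killing the residual constant via $I$ are all fine. One caveat: the tool you invoke is not the one the paper provides. Section \ref{maxprin} contains only a \emph{parabolic weak} maximum principle (Proposition \ref{maximum}), whereas your step ``$Lu=0$ forces $u$ constant'' needs the \emph{elliptic strong} maximum principle for transversally elliptic operators. It does hold here, but you should justify it: in a foliated chart a basic function descends to the local leaf space, an open set of $\C^n$, where it satisfies a uniformly elliptic equation with smooth coefficients and no zeroth-order term; by Hopf's strong maximum principle the set where $u$ attains its maximum is therefore open, it is clearly closed and nonempty by compactness, hence equals $M$ by connectedness and $u$ is constant. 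With that sentence added, your proof is complete and is arguably more elementary than the paper's.
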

\begin{proof}
Let $f$ be a curve in the space $\bar{\mathcal H}$ obtained as completion of $\mathcal H$ with respect to the $C^2_w$-norm.  Then taking into account the definition of $J_{\chi}$, 
%$$
%\partial_tJ_{\chi}(f)=\int_M \dot f\, \sigma_f\,\eta\wedge (d\eta_f)^{n}\,,
%$$
Lemma \ref{int1} and equations \eqref{vol}, \eqref{sigma_t}, we have 
\begin{equation}
\begin{split}
\partial_t^2\,J_{\chi}(f)=&(\ddot f,\sigma_f)_f+\frac12(\Delta_f\dot f,\dot f\sigma_f)_f+i(\dot f\bar\partial_B\partial_B\dot f,\chi)_f\\
=&\frac{1}{2^nn!}\int_M\left(\ddot f-\frac12|\partial_B \dot f|_g^2 \right)\sigma_f\,\eta\wedge (d\eta_f)^{n}-i(\chi,\partial_B \dot f\wedge \bar\partial_B\dot f)_f\,. 
\end{split}\nonumber
\end{equation}
Therefore if $f$ solves the modified geodesic equation \eqref{wgeodesic}, then 
$$
\partial_t^2\,J_{\chi}(f)\geq -i(\chi,\partial_B \dot f\wedge \bar\partial_B\dot f)_f\geq 0\,. 
$$
Let us assume now to have two critical points $f_0$ and $f_1$ of $J_{\chi}$ in $\mathcal H_0$ and denote by $\bar{\mathcal H}_0$
the competition of $\mathcal H_0$ with respect to the $C^2_w$-norm. 
Then, in view of \cite{guanzhangA}, there exists a $C^{1,1}$-gedesic $f$ in $\bar{\mathcal H}_0$  such that  $f(0)=f_0$ and $f(1)=f_1$. Let $h(t)=J_{\chi}(f(t))$. Then since $f_0$ and $f_1$ are critical points of $J_{\chi}$, we have $\dot h(0)=\dot h(1)=0$. Since $\ddot h\geq 0$, it as to be $\ddot h\equiv 0$ which implies $\partial_B \dot f=0$ and $\dot f(t)$ is constant for every $t\in [0,1]$. Finally, since $f$ is a curve in $\bar{\mathcal H}_0$, then $I(f)=0$ and therefore $\dot f=0$, which implies $f_0=f_1$, as required.   
\end{proof}

On a compact $3$-dimensional Sasaki manifold, the existence of a critical metric is always guaranteed. Indeed,   
if $(M,\xi,\Phi,\eta,g)$ is a compact $3$-dimensional Sasaki manifold with a fixed background transverse K\"ahler form $\chi$, then we can write:
$$
\chi=\frac14\langle \chi ,d\eta\rangle\,d\eta\,,\quad  d\eta_h= \left(1-\frac12 \Delta_Bh\right)\, d\eta,
$$
where the scalar product and the basic Laplacian are computed with respect to the metric induced by $\eta$. Hence, $\eta_h=\eta+d^ch$ induces a critical metric if and only if $h$ solves:
%\b
%\label{scalar}
$$\Delta_Bh=2-\frac{1}{c}\langle \chi ,d\eta\rangle\,,\quad  c=\frac{2\int_M\eta\wedge\chi}{\int_M\eta\wedge d\eta}\,,$$
%\e
which has always a solution since:  
$$
\int_M \left(2-\frac{1}{c}\langle \chi ,d\eta\rangle\right)\eta\wedge d\eta=0\,. 
$$

In higher dimensions there is a cohomological obstruction  to the existence of a critical metric  similar to the one in the K\"ahler case. 

Recall that if $(M,\omega)$ is a compact K\"ahler $2n$-dimensional manifold (with $n>1$) with a fixed background  K\"ahler metric $\chi$, then the existence of a $J_{\chi}$-critical normalised K\"ahler potential on $(M,\omega)$
implies that $[c\omega-\chi]$ is a K\"ahler class in $H^2(M,\R)$ (see \cite{donaldsonMM}).  
%{\color{blue} In \cite{donaldsonMM}, Donaldson conjectured that the condition $[c\omega-\chi]>0$ is also sufficient for the existence of a critical metric and Chen proved in \cite{chenM} that the conjecture is true on complex surfaces.}

In \cite{chenM} Chen proved that such a condition is sufficient for the existence of a critical metric on complex surfaces, while in the recent paper \cite{LSS}, Lejmi and Sz\'ekelhyidi provide an example where it is satisfied, but the $J$--flow does not converge. In \cite{songweinkoveJ}, Song and Weinkove find a necessary and sufficient condition for the convergence of the flow in terms of a $(n-1,n-1)$-form. Some further results have been obtained in \cite{wein1,wein2}.

%{\color{blue}Some partial results in higher dimensions about this problem have been obtained in \cite{songweinkoveJ,wein1,wein2}. 
%The same conjecture can be stated also for the Sasakian case.
%Indeed, given  a Sasakian manifold $(M,\xi,\Phi,\eta,g)$ with a fixed background transverse K\"ahler form $\chi$, then if $h\in \mathcal H_0$ is a critical normalised Sasakian potential, then 
%$\frac c2d\eta_h-\chi$ is a transverse K\"ahler form.
%Hence it is rather natural to conjecture that the existence of a Sasakian potential $h$ satisfying  $\frac c2d\eta_h-\chi>0$, implies the existence of a {\em critical} Sasaki metric and we expect that the results in \cite{songweinkoveJ,wein1,wein2} could be generalised to the Sasakian case.}

 The Sasakian context is quite similar. Indeed, given a Sasakian manifold $(M,\xi,\Phi,\eta,g)$ with a fixed background transverse K\"ahler form $\chi$, then if $h\in \mathcal H_0$ is a critical normalised Sasakian potential, then 
$\frac c2d\eta_h-\chi$ is a transverse K\"ahler form and we expect that the results in \cite{songweinkoveJ,wein1,wein2} could be generalised to the Sasakian case.

The following proposition is about the existence of a critical Sasaki metric in dimension $5$: 

\begin{prop}
Let  $(M,\xi,\Phi,\eta,g)$ be a compact $5$-dimensional Sasaki manifold. Assume that there exists a map $h\in \mathcal{H}_0$ such that  $\frac c2\,(d\eta+dd^ch)-\chi$ is a transverse K\"ahler form. Then, there exists a critical Sasaki metric on $M$. 
\end{prop}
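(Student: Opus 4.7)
The strategy is to adapt Chen's convergence proof from \cite{chenM} for the $J$-flow on compact K\"ahler surfaces to the Sasakian setting, starting the Sasaki $J$-flow from the hypothesised potential $h_0$. In transverse complex dimension $n=2$, the condition $\frac c2\,d\eta_{h_0}-\chi>0$ is precisely the Sasakian counterpart of the K\"ahler cone condition that Chen exploits on surfaces, and the local reduction to $\C^2$ provided by the foliated coordinates of Section \ref{foliatedcoordinates} should allow his pointwise estimates to be imported essentially verbatim, the only genuinely new ingredient being the transverse maximum principle of Section \ref{maxprin}.

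Concretely, I would proceed as follows. First, invoke Theorem \ref{main} to produce a long-time solution $f\colon[0,\infty)\to\mathcal H_0$ of \eqref{jflow} with $f(0)=h_0$. Second, and most delicately, show that the positivity $\frac c2\,d\eta_{f(t)}-\chi>0$ is preserved for all $t\ge 0$: this is done by computing the evolution of the largest eigenvalue of $\chi$ with respect to $\frac c2\,d\eta_f$---or, equivalently, of a regularised scalar such as $\log\mathrm{tr}_{\chi}\,d\eta_f-Af$ with $A$ a large constant---and applying the transverse maximum principle, the pointwise computation inside each foliated chart being exactly Chen's. Third, once the cone condition is preserved along the flow, a Yau-type trace estimate yields a uniform transverse $C^2$ bound on $f$, and Evans--Krylov applied inside each foliated chart upgrades this to uniform $C^k$ bounds for every $k$; no regularity in the Reeb direction is needed because $f$ is basic.

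Finally, since $J_\chi$ is non-increasing along the flow and bounded below, any $C^\infty$ subsequential limit $f_\infty$ of $f(t_k)$ lies in $\mathcal H_0$ and, by \eqref{jflow}, satisfies $\sigma_{f_\infty}=c$, so that $(\xi,\Phi_{f_\infty},\eta_{f_\infty},g_{f_\infty})$ is a critical Sasaki structure; Proposition \ref{stimevarie} then promotes this to full convergence of the flow. The main obstacle is the preservation of the cone condition, which depends on the algebraic identity special to $n=2$ that makes the critical equation quadratic in the eigenvalues of $d\eta_f$ relative to $\chi$; it is precisely this feature that restricts the statement to dimension five.
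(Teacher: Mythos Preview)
Your proposal takes a genuinely different route from the paper. The paper does \emph{not} run the Sasaki $J$-flow at all for this proposition: instead, after normalising $c=1$ and using the hypothesised $h$ as the new base point, it rewrites the critical equation $\sigma_h=c$ for $n=2$ as the transverse complex Monge--Amp\`ere equation
\[
(\Omega+dd^c h)^2=\chi^2,\qquad \Omega:=\tfrac12\,d\eta-\chi,
\]
and then simply invokes the foliated Calabi--Yau theorem of El Kacimi-Alaoui \cite{aziz}. The special feature of $n=2$ used here is purely algebraic: the critical equation becomes a Monge--Amp\`ere equation for the \emph{given} transverse K\"ahler form $\Omega$. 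This is a two-line elliptic argument, entirely independent of the flow.

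Your parabolic strategy---start the flow at $h_0$, preserve the cone condition $\frac c2\,d\eta_f-\chi>0$, upgrade to uniform $C^\infty$ bounds, extract a critical limit---is the Sasakian analogue of Weinkove's surface convergence result \cite{wein1} rather than of \cite{chenM}, and in outline it should go through. It buys you more than the paper's argument (actual convergence of the flow, not just existence of a critical metric), but at the cost of substantially more work. Two points in your sketch deserve care: the test function $\log\operatorname{tr}_{\chi}d\eta_f-Af$ is the quantity used for the \emph{upper} bound on the metric, not for preserving the cone condition, which instead requires tracking the largest eigenvalue of $\chi$ relative to $d\eta_f$ via a tensor maximum principle as in Proposition~\ref{maxtensors}; and the use of $-Af$ presupposes a uniform $C^0$ bound on $f$, which you have not addressed and which in the K\"ahler case is itself a non-trivial step. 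The paper sidesteps all of this by reducing directly to a known elliptic theorem.
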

\begin{proof}

Up to rescaling $\eta$, we may assume $c=1$.  A function $h\in \mathcal{H}_0$ is critical if and only if  
$$
2\,\eta\wedge \chi\wedge\left(\frac12d \eta+dd^c h\right)= \eta\wedge \left(\frac12d\eta+dd^ch\right)^2\,. 
$$
Let $\Omega=\frac12 d\eta-\chi$. Then our hypothesis implies that $\Omega$ is a transverse K\"ahler form and moreover 
by substituting we get 
$$
(\Omega+dd^c h)^2=\chi^2\,. 
$$
Finally, the Calabi-Yau theorem in K\"ahler foliations \cite{aziz} implies the statement. 
\end{proof}

\section{Well-posedness of the Sasaki $J$-flow}  \label{wellposed}

\begin{theorem}
The Sasaki $J$-flow is well-posed, i.e., for every initial datum $f_0$, system \eqref{jflow} has a unique maximal solution $f$ defined in $[0,\epsilon_{\max})$, for some positive $\epsilon_{\max}$. 
\end{theorem}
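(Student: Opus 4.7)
The difficulty is that the right-hand side of \eqref{jflow}, viewed as a differential operator on $C^\infty(M,\R)$, is only transversally elliptic: its principal symbol vanishes in the covector direction dual to $\xi$, so standard short-time existence for strictly parabolic quasilinear PDEs does not apply directly. My plan is to follow the trick used in \cite{smoczyk}: enlarge the domain from $\mathcal H \subset C^\infty_B(M,\R)$ to an open neighbourhood of $f_0$ in $C^\infty(M,\R)$ and modify the flow by a term which vanishes on basic functions but restores ellipticity along $\xi$.

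Using that $\xi = \partial_z$ in adapted coordinates, I would consider the modified flow
$$
\dot f = c - \sigma_f + \xi(\xi(f))\,, \qquad f(0) = f_0\,,
$$
with $\sigma_f$ extended to arbitrary $f \in C^\infty(M,\R)$ by the same local formula $g_f^{\bar b a}\chi_{a\bar b}$ involving only transverse second derivatives of $f$ (this is globally well-defined because the conditions \eqref{coord} depend only on the polarisation $(\xi,J)$). A direct linearisation shows that the modified equation is strictly parabolic near $f_0$: the contribution coming from $-\sigma_f$ has negative-definite symbol on transverse covectors (since $\chi$ and $g_f$ are positive definite), while the added $\xi\xi$ term supplies the missing negative eigenvalue along $\xi$. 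Classical short-time existence for nonlinear strictly parabolic equations on the compact manifold $M$ then produces a unique smooth solution of the modified equation on some interval $[0,\epsilon)$.

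Next, I would verify that the basic condition is preserved, so that $\xi(\xi(f)) \equiv 0$ on the solution and $f$ actually solves \eqref{jflow}. Setting $u := \xi(f)$, and using that $\xi$ commutes with transverse derivatives in adapted coordinates and that $g^T$, $\chi$ are basic, differentiation of the modified equation along $\xi$ yields a linear strictly parabolic equation for $u$ of the form
$$
\dot u = g_f^{\bar b p}g_f^{\bar q a}\chi_{a\bar b}\, u_{,p\bar q} + \xi(\xi(u))\,,
$$
with $u(0) = \xi(f_0) = 0$. Uniqueness for linear parabolic equations then forces $u \equiv 0$, so $f(t)$ remains basic. Since $\mathcal H$ is open in $C^\infty_B(M,\R)$, up to shrinking $\epsilon$ we also have $f(t) \in \mathcal H$, so $\eta_{f(t)}$ stays a contact form. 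A standard continuation argument, combined with uniqueness applied to the difference of two solutions, then produces a unique maximal solution on some $[0,\epsilon_{\max})$.

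The main obstacle I expect is the first step: extending $\sigma_f$ off basic functions in a coordinate-consistent way and verifying that the modified equation is genuinely strictly parabolic on a neighbourhood of $f_0$. Once this is in place, the preservation of the basic condition and the passage to a maximal solution follow by classical PDE arguments.
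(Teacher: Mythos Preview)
Your approach is essentially the paper's: add a second-order term in the Reeb direction to make the modified flow strictly parabolic on all of $C^\infty(M,\R)$, invoke standard short-time existence, then show the basic condition is preserved by deriving a linear parabolic equation for $\xi(f)$ with zero initial data. Two small remarks: your sign $+\xi(\xi(f))$ is the correct one for forward parabolicity (the paper writes $-\xi^2(f)$, which is a sign slip), and the paper also checks that the normalization $I(f)=0$ is preserved along the flow---a step you omit, but which is needed since the Sasaki $J$-flow is meant as a flow on $\mathcal H_0$ rather than $\mathcal H$.
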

\begin{proof}
Since $\mathcal{H}$ is not open in $C^{\infty}(M,\R)$, to apply the standard parabolic theory we have to use a trick adopted by Smoczyk, Wang and Zhang for showing the short-time existence of the Sasaki-Ricci flow in \cite{smoczyk}. 
Since the functional $F\colon \mathcal{H}\to \R$ defined as 
$$
F(f)=\xi^2(f)+\sigma_f,
$$
is elliptic, the standard parabolic theory implies that the geometric flow 
\begin{equation}\label{modifiedflow}
\dot f=c-\xi^2(f)-\sigma_f,\quad 
f(0)=f_0,
\end{equation}
has a unique maximal solution $f\in C^{\infty}(M\times[0,\epsilon_{\rm max}),\R)$, for some $\epsilon_{\rm max}>0$. Of course if $f(\cdot,t)$ is a solution to \eqref{modifiedflow} which is basic for every $t$ and $I(f)=0$, then $f$ solves \eqref{jflow}. We first show that when $f_0$ is basic, then the solution $f$ to \eqref{modifiedflow} holds basic for every $t\in[0,\epsilon_{\rm max})$. We have 
$$
\partial_t \xi(f)=\xi(\dot f)=\xi(-\xi^2(f)-g^{\bar k r}\chi_{r\bar k})
$$
Moreover, since the components of $\chi$ are basic, we have 
$$
\xi(g^{\bar k r}\chi_{r\bar k})=-g^{\bar k l}_f(\xi(f_{,l\bar m}))g^{\bar m r}_f\chi_{r\bar k}=
g^{\bar k l}_f\xi (f)_{,l\bar m}\,g^{\bar m r}_f\chi_{r\bar k}=-\langle dd^c_B\xi (f),\chi \rangle_f\,,
$$
i.e. 
\b\label{xi}
\partial_t \xi(f)=-\xi^3(f)+\langle dd^c_B\xi (f),\chi \rangle_f\,. 
\e
Equation \eqref{xi} is parabolic in $\xi(f)$ and then, since the solution to  a parabolic problem is unique, if $\xi(f_0)=0$, $\xi(f(t))=0$ for every $t\in [0,\epsilon_{\rm max})$, as required.  
Finally we show that if $f_0$ is normalised, then $I(f)=0$ for every $t\in [0,\epsilon_{\rm max})$.  We have 
$$
\partial_tI(f)=\frac{1}{2^nn!}\int_M \dot f\eta\wedge d\eta_{f}^n=\frac{1}{2^nn!}\int_M (c-\sigma_f)\,\eta\wedge d\eta_{f}^n,
$$
and since $c\int_M\,\eta\wedge d\eta_{f}^n=\int_M\sigma_f\,\eta\wedge d\eta_{f}^n$ we have $\partial_tI(f)=0$. Therefore, since $I(f_0)=0$, $I(f)=0$ for every $t\in[0,\epsilon_{\max})$ and the claim follows. 
\end{proof}

\begin{rem}{\em 
Alternatively, the short-time existence of the Sasaki $J$-flow can be obtained by invoking the short-time existence of any second order {\em transversally parabolic equation} on compact manifolds foliated by Riemannian foliations. A proof of the latter result can be founded in \cite{BHV}.  }
\end{rem}

In analogy to the K\"ahler case, let ${\rm En} \colon \mathcal H_0\to \R$ be the energy functional 
$$
{\rm En}(h)=\frac{1}{2^nn!}\int_M\sigma_h^2\,\eta\wedge (d\eta_h)^{n}=(\sigma_h,\sigma_h)_{h}^2.
$$

\begin{prop}\label{stimevarie}
The following items hold:
\begin{itemize}
\item[1.] ${\rm En}$ has the same critical points of $J_\chi$ and it is strictly decreasing along the Sasaki $J$-flow;
\item[2.] any critical point of ${\rm En}$ is a local minimizer;
\item[3.] the lenght of any curve in $\mathcal H_0$ and the distance of any two points in $\mathcal H_0$ decrease under the $J$-flow.
\end{itemize}
\end{prop}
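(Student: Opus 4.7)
The three items will all follow from a single variational formula. For a smooth curve $f(t)$ in $\mathcal H_0$, I would differentiate $\mathrm{En}(f) = (\sigma_f,\sigma_f)_f$ via \eqref{vol} and \eqref{sigma_t} to get
\[
\partial_t\,\mathrm{En}(f) \,=\, 2(\dot\sigma_f,\sigma_f)_f + (\Delta_f\dot f,\sigma_f^2)_f\,.
\]
Applying Lemma~\ref{int1}(i) with $u=\sigma_f$ rewrites the second term as $-2(\sigma_f\partial_B\dot f,\partial_B\sigma_f)_f$; expressing $(\dot\sigma_f,\sigma_f)_f$ as $i(\bar\partial_B\partial_B\dot f,\sigma_f\chi)_f$ and using Lemma~\ref{int1}(ii) with $u=\sigma_f$ yields $(\sigma_f\partial_B\dot f,\partial_B\sigma_f)_f - i(\chi,\partial_B\sigma_f\wedge\bar\partial_B\dot f)_f$. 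The quadratic-in-$\dot f$ cross-terms cancel, giving the master identity
\[
\partial_t\,\mathrm{En}(f) \,=\, -2i(\chi,\partial_B\sigma_f\wedge\bar\partial_B\dot f)_f\,.
\]
In local coordinates, $-i\langle\chi,\partial_B\sigma_f\wedge\bar\partial_B\sigma_f\rangle_f = \chi_{a\bar b}\sigma_{f,\bar r}\sigma_{f,j}g_f^{\bar r a}g_f^{\bar b j}$ is a positive-definite Hermitian form in the vector $V^a := g_f^{\bar r a}\sigma_{f,\bar r}$, hence pointwise non-negative and vanishing exactly when $\partial_B\sigma_f = 0$.

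For item~1, substituting the $J$-flow equation $\dot f = c-\sigma_f$ into the master identity yields $\partial_t\,\mathrm{En}(f) = 2i(\chi,\partial_B\sigma_f\wedge\bar\partial_B\sigma_f)_f\leq 0$, with equality iff $\partial_B\sigma_f = 0$; since $\int_M(\sigma_f-c)\eta\wedge d\eta_f^n = 0$ by the definition of $c$, this forces $\sigma_f\equiv c$. This proves strict monotonicity away from the critical set and shows that critical points of $J_\chi$ are critical for $\mathrm{En}$. Conversely, at a critical point $h$ of $\mathrm{En}$ the variation $\dot f = c-\sigma_h$ lies in $T_h\mathcal H_0$, so the master identity applied to it forces $(\chi,\partial_B\sigma_h\wedge\bar\partial_B\sigma_h)_h = 0$ and hence $\sigma_h = c$. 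Item~2 is then immediate from Cauchy--Schwarz: with $V := \int_M\eta\wedge d\eta^n$ (independent of $h$) and $\int_M\sigma_h\,\eta\wedge d\eta_h^n = cV$, one obtains $\mathrm{En}(h)\geq c^2V/(2^n n!)$ with equality iff $\sigma_h = c$, so every critical point is in fact a global minimizer.

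For item~3 I would follow Chen's strategy. Take a smooth family $\gamma(t,s)$ in $\mathcal H_0$ with $s\mapsto\gamma(t,s)$ solving the $J$-flow, set $u = \partial_t\gamma$, and use $[\partial_t,\partial_s]=0$ to get $\partial_s u = -\partial_t\sigma_\gamma$. Differentiating $\|u\|_\gamma^2$ via \eqref{vol} gives $\partial_s\|u\|_\gamma^2 = -2(u,\partial_t\sigma_\gamma)_\gamma - (u^2,\Delta_\gamma\sigma_\gamma)_\gamma$. The crucial cancellation is that Lemma~\ref{int1}(i) applied with $\dot f = u$ and $u = u$, together with self-adjointness of $\Delta_\gamma$ and the identity $\Delta_\gamma u^2 = 2u\Delta_\gamma u + 2|\partial_B u|_\gamma^2$, yields $(u^2,\Delta_\gamma\sigma_\gamma)_\gamma = -(\partial_B u^2,\partial_B\sigma_\gamma)_\gamma$, while Lemma~\ref{int1}(iii) gives $-2(u,\partial_t\sigma_\gamma)_\gamma = -(\partial_B u^2,\partial_B\sigma_\gamma)_\gamma + 2i(\chi,\partial_B u\wedge\bar\partial_B u)_\gamma$. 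The terms involving $\partial_B\sigma_\gamma$ cancel and what remains is
\[
\partial_s\|u\|_\gamma^2 \,=\, 2i(\chi,\partial_B u\wedge\bar\partial_B u)_\gamma\,\leq\,0\,.
\]
Integrating in $t$ shows that the length $L(s) = \int_0^1\|u\|_\gamma\,dt$ is non-increasing under the flow, and taking infima over admissible paths (with the $C^{1,1}$-geodesic completion of \cite{guanzhangA} handling the general distance statement) proves the claim. The main technical hurdle throughout is the precise bookkeeping of the several cross-terms produced by Lemma~\ref{int1}: they all conspire to cancel, and it is this cancellation that makes the Sasakian argument mirror Chen's original K\"ahler one so cleanly.
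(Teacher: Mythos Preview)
Your arguments for items~1 and~3 follow the paper's route essentially verbatim: the same first-variation identity $\partial_t\,\mathrm{En}(f)=-2i(\chi,\partial_B\dot f\wedge\bar\partial_B\sigma_f)_f$ (your version differs from the paper's only by a harmless complex conjugation, the expression being real), and the same length computation with the roles of the curve parameter and the flow parameter swapped. Your justification of $(u^2,\Delta_\gamma\sigma_\gamma)_\gamma=-(\partial_Bu^2,\partial_B\sigma_\gamma)_\gamma$ via Lemma~\ref{int1}(i) and the product rule for $\Delta_\gamma$ is more elaborate than necessary (it is just the definition $\Delta_\gamma=-\partial_B^*\partial_B$), but it is not wrong.

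For item~2 you take a genuinely different, and in fact sharper, route. The paper computes the second variation of $\mathrm{En}$ at a critical point $h$, obtaining
\[
\partial_s\partial_t\,\mathrm{En}(f)_{|(0,0)}=\frac{1}{2^{n-1}n!}\int_M\langle i\partial_B\bar\partial_B u,\chi\rangle_h\,\langle i\partial_B\bar\partial_B v,\chi\rangle_h\,\eta\wedge(d\eta_h)^n,
\]
and reads off that this bilinear form is positive definite, so $h$ is a local minimizer. Your Cauchy--Schwarz argument bypasses the Hessian entirely: since $\int_M\sigma_h\,\eta\wedge d\eta_h^n=cV$ and $\int_M\eta\wedge d\eta_h^n=V$ are both independent of $h$, one gets the global bound $\mathrm{En}(h)\geq c^2V/(2^nn!)$ with equality exactly when $\sigma_h\equiv c$. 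This is more elementary and yields the stronger conclusion that critical points are \emph{global} minimizers. What the paper's second-variation computation buys in return is explicit control of the Hessian of $\mathrm{En}$ at the critical point, which your argument does not see.
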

\begin{proof}
%($i$). Taking into account 
%$$
%\partial_tJ_{\chi}(f)=\int_M \dot f\, \sigma_f\,\eta\wedge (d\eta_f)^{n}\,,
%$$
%Lemma \ref{int1} and equations \eqref{vol}, \eqref{sigma_t}, we have 
%\begin{equation}
%\begin{split}
%\partial_t^2\,J_{\chi}(f)=&(\ddot f,\sigma)_f+{\color{blue}\frac12}(\Delta_f\dot f,\dot f\sigma)_f+i(\dot f\bar\partial_B\partial_B\dot f,\chi)_f\\
%=&\int_M\left(\ddot f-{\color{blue}\frac12}|\partial_B \dot f|_g^2 \right)\sigma\,\eta\wedge (d\eta_f)^{n}+\int_M\chi(\partial_B \dot f,\partial_B\dot f)\,\eta\wedge(d\eta_f)^n.
%\end{split}\nonumber
%\end{equation}
%Therefore if $f$ is a geodesic in $\mathcal H$, then it solves $\ddot f-\frac12|\partial_B \dot f|_g^2=0$ and we have 
%$$
%\partial_t^2\,J_{\chi}(f)=\int_M\chi(\partial_B \dot f,\partial_B\dot f)\,\eta\wedge(d\eta_f)^n 
%$$
%which implies the statement. 
%$$(\ddot f,\sigma)_f-(\partial_B \dot f,\sigma\partial_B \dot f)_f-(\partial\dot f,\dot f\partial \sigma)_f-(\dot f\partial\bar\partial\dot f,\chi)_f$$
%=&\int_M (\ddot f-|\partial_B \dot f|_g^2)\, \sigma\,\eta\wedge (d\eta_f)^{n}-\int_M\dot f(g^{T\bar k j}\dot f_j\sigma_{\bar k}+g^{T\bar\beta\alpha}g^{T\bar kj}\dot f_{\alpha\bar k}\chi_{j\bar \beta})\,\eta\wedge (d\eta_f)^{n}
%
$1.$ Let $f\!:[0,1]\to \mathcal{H}_0$ be a smooth curve. Then, by using \eqref{sigma_t} and Lemma \ref{int1}, the first variation of $E$ reads:
\begin{equation*}
\begin{split}
\partial_t {\rm En}(f)=&\frac{1}{2^nn!}\partial_t\int_M\sigma_f^2\,\eta\wedge (d\eta_f)^{n}
=2(\sigma_f,\dot \sigma_f)_f+(\sigma_f^2,\Delta_f\dot f)_f\\
=&2(\sigma_f\partial_B \dot f,\partial_B\sigma_f)-2i(\chi,\partial_B\dot f\wedge\bar \partial_B \sigma_f)_f-2(\partial_B\sigma_f,\sigma_f\partial_B\dot f)_f\\
=&-2i(\chi,\partial_B\dot f\wedge\bar \partial_B \sigma_f)_f.
\end{split}
\end{equation*}
Along the Sasaki $J$-flow one has $\dot f=c-\sigma_f$, thus:
$$
\partial_t {\rm En}(f)=-2i(\chi,\partial_B\sigma_f\wedge\bar \partial_B \sigma_f)_f\leq 0,
$$
and ${\rm En}$ is strictly decreasing along the $J$-flow. Moreover, if $h\in \mathcal H_0$ is a critical point of ${\rm En}$, then 
$\partial_B \sigma_h=0$ which implies that $h$ is critical if and only if $\sigma_h=c$. 

\medskip  
$2.$
Now we compute the second variation of ${\rm En}$. Let $f\colon (-\delta,\delta) \times (-\delta,\delta) \to \mathcal H_0$ be a smooth map in the variables $(t,s)$ 
 Assume that $f(0,0)=h$ is a critical point of ${\rm En}$ and let $u=\partial_{t}f_{|(0,0)}$, $v=\partial_s f_{|(0,0)}$. 
Then we have  
$$
\partial_s\partial_t {\rm En}(\alpha)=\frac{1}{2^{n-1}n!}i\partial_s\left(\int_M\langle\chi,\partial_B\partial_t\alpha\wedge\bar \partial_B \sigma_\alpha\rangle_\alpha\,\eta\wedge(d\eta_\alpha)^n\right)\,,
$$
and 
$$
\partial_s\partial_t {\rm En}(\alpha)_{|(0,0)}=\frac{1}{2^{n-1}n!}\int_M\langle\chi,\partial_Bu\wedge\bar \partial_B \partial_s\sigma_{\alpha |(0,0)}\rangle_h\,\eta\wedge(d\eta_h)^n=2(\chi,\partial_Bu\wedge\bar \partial_B \partial_s\sigma_{\alpha |(0,0)})_h,
$$
since $\sigma_h$ is constant. Now 
$$
2(\chi,\partial_Bu\wedge\bar \partial_B \partial_s\sigma_{\alpha |(0,0)})_h=2 (\chi,\partial_s\sigma_{\alpha |(0,0)}\, \partial_B\bar\partial_Bu)_h,
$$
and formula \eqref{sigma_t} implies 
\begin{equation}\label{secondvar}
\begin{split}
\partial_s\partial_t {\rm En}(f)_{|(0,0)}=\frac{1}{2^{n-1}n!}\int_M\langle i\partial_B\bar\partial_B u,\chi\rangle_h\langle i\partial_B\bar\partial_B v,\chi\rangle_h\,\eta\wedge(d\eta_h)^n,
\end{split}\nonumber
\end{equation}
which implies that $\partial_s\partial_t {\rm En}(f)_{|(0,0)}$ is positive definite as symmetric form.

\medskip 
$3.$ Given smooth curve $u\colon[0,1]\to \mathcal H_0$ in $\mathcal H_0$ and $h\in \mathcal H_0$ we denote by 
$$
\mathcal{L}(h,u) =\frac{1}{2^{n}n!}\int_0^1\int_M \dot u^2 \  \eta\wedge(d\eta_h)^n\wedge ds=(\dot u,\dot u)_{h},
$$
the square of the length of $u$ we respect to the Sasaki metric induced by $h$. 
Let $f\colon [0,\epsilon)\times [0,1]\to \mathcal H_0$ and assume that $t\mapsto f(t,s)$ is a solution to the $J$-flow for every $s\in [0,1]$. Then, by using Lemma \ref{int1}, we have
\begin{equation}
\begin{split}
2^{n}n!\partial_t\mathcal{L}(f,f)=&\partial_t\left[\int_0^1\int_M (\partial_s f)^2\,  \eta\wedge(d\eta_f)^n\wedge ds\right]\\
=&\int_0^1\int_M 2\partial_s\partial_tf\partial_s f+ (\partial_sf)^2\Delta_f(\partial_tf)\,  \eta\wedge(d\eta_f)^n\wedge ds\\
=&\int_0^1\int_M -2\partial_s\sigma_f\partial_s f- (\partial_sf)^2\Delta_f(\sigma_f)\,  \eta\wedge(d\eta_f)^n\wedge ds\\
=&-\int_0^1\left[2(\partial_s\sigma_f,\partial_sf)_f+((\partial_sf)^2,\Delta_f \sigma_f)_f \right]ds\\
=&-\int_0^1\left[2(\partial_s\sigma_f,\partial_sf)_f+(\partial_B(\partial_sf)^2,\partial_B \sigma_f)_f \right]ds\\
=&2i\int_0^1(\chi,\partial_B \partial_sf\wedge \bar \partial_B \partial_sf)_f\,ds\leq 0
\end{split}\nonumber
\end{equation}
and the equality holds if and only if $\partial_sf(t,s)$ is constant in $s$. 
%{\color{red} Chen lo fa per geodetiche invece che per curve e conclude con or the geodesic is trivial}.
\end{proof}

\section{ A Maximum principle for basic maps and tensors}\label{maxprin}
In this section we introduce a basic principle for transversally elliptic operators on Sasakian manifolds. 
The principle will be applied in the next section to compute the $C^2$-estimate about the solutions to \eqref{jflow}.

Let $(M,\xi,\Phi,\eta)$  be a Sasakian manifold.  By a {\em smooth family of  basic linear partial differential operators}
$\{E\}_{t\in [0,\epsilon)}$ we mean a smooth family of operators $E(\cdot,t)\colon C_B^{\infty}(M,\R)\to C^{\infty}_B(M,\R)$ which can be locally written as 
$$
E(h(y),t)=\sum_{1\leq |k|\leq m} a_k(y,t)
\frac{\partial^{|k|}}{\partial y^{k_1}\dots\partial y^{k_r}}h(y)
$$
for every $h\in \C_B^{\infty}(M,\R)$, where $\{y^1,\dots,y^{2n},z\}$ are real coordinates on $M$ such that $\xi=\partial_z\,. $
The maps $a_k$ are assumed to be smooth and basic in the space coordinates (see \cite{aziz} for a detailed descriptions of these operators on compact manifolds foliated by Riemannian foliations). Observe that $E$ can be regraded as a functional $E\colon C_B^{\infty}(M\times [0,\epsilon),\R)\to C_B^{\infty}(M\times [0,\epsilon),\R)$ in a natural way.  We further make the strong assumption on $E$ to satisfy 
\b\label{E}
E(h(x,t),t)\leq 0,
\e 
whenever the complex Hessian $dd^ch$ of $h$ is nonpositive at the point $(x,t)\in M\times [0,\epsilon).$ 

%\begin{lemma}\label{max}
%Assume that $h\in \C^{\infty}(M\times [0,\epsilon),\mathds{R})$ achieves a maximum at $(x_0,t_0)$, with $t_0>0$. Then
%$$
%\partial_t h\geq 0, \quad \tilde \Delta_f h+\xi^2(h)\leq 0\quad \mbox{ at }(x_0,t_0)\,. 
%$$
%\end{lemma}
%\begin{proof}
%Assume $t_0>0$. Then $\partial_t h\geq 0$, $dh=0$, $dd^ch\leq 0$ (see e.g. \cite{songweinkove}). Set local coordinates such that at $(x_0,z_0,t_0)$ one has $(g_f^T)_{j\bar k}=\delta_{jk}$, $\chi_{j\bar k}=\lambda_j\delta_{jk}$. From $dd^ch\leq 0$ we get that at $(x_0,z_0^1,\dots, z_0^n,t_0)$ one has $\frac{\partial^2}{\partial z^j\partial \bar z^j}h=h_{j\bar j}\leq 0$, for all $j=1,\dots, n$, $\xi^2(h)\leq 0$, and conclusion follows.
%\end{proof}

\begin{prop}[Maximum principle for basic maps]\label{maximum}
Assume that $h\in C^{\infty}(M\times [0,\epsilon),\mathds{R})$ satisfies
$$
\partial_th(x,t)-E(h(x,t),t)\leq 0. 
$$
Then
$$
\sup_{(x,t)\in M\times [0,\epsilon)} h(x,t)\leq \sup_{x\in M} h(x,0).
$$
\end{prop}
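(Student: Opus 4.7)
The plan is to reduce to the standard parabolic maximum principle, with hypothesis~\eqref{E} playing the role that ellipticity plays in the Euclidean case. First I would fix $T\in(0,\epsilon)$ and a small $\delta>0$ and set $\tilde h(x,t):=h(x,t)-\delta t$ on the compact set $M\times[0,T]$. Let $(x_0,t_0)$ be a point where $\tilde h$ attains its maximum; the goal is to show $t_0=0$, since then
\begin{equation*}
h(x,t)-\delta t=\tilde h(x,t)\le \max_M\tilde h(\cdot,0)=\max_M h(\cdot,0)\quad\text{on }M\times[0,T],
\end{equation*}
and one can let $\delta\to 0^+$ and $T\to\epsilon^-$ to conclude.

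The core step is to rule out $t_0>0$. At such a candidate point one has $\partial_t\tilde h(x_0,t_0)\ge 0$ (because either $t_0$ is interior in $t$ or it is the right endpoint $T$), hence $\partial_t h(x_0,t_0)\ge\delta$. Moreover $x_0$ is a spatial maximum of $h(\cdot,t_0)$, so the real Hessian of $h(\cdot,t_0)$ at $x_0$ is negative semidefinite. The one place where some care is required is to upgrade this to nonpositivity of the complex Hessian $dd^c h$ at $(x_0,t_0)$, as demanded by~\eqref{E}. In foliated coordinates $\{z^1,\ldots,z^n,z\}$ as in \S\ref{foliatedcoordinates}, $h(\cdot,t_0)$ is independent of $z$; a direct computation (writing $Z^k=X^k+iY^k$ and $V$ for the corresponding real vector) identifies the Hermitian form $Z\mapsto h_{,i\bar j}(x_0,t_0)Z^i\bar Z^j$ with $\tfrac14\bigl(\mathrm{Hess}\,h(V,V)+\mathrm{Hess}\,h(JV,JV)\bigr)$, both of whose summands are $\le 0$. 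Thus $dd^c h\le 0$ at $(x_0,t_0)$, and hypothesis~\eqref{E} yields $E(h(x_0,t_0),t_0)\le 0$. Combined with the differential inequality $\partial_t h-E(h)\le 0$ this gives $0<\delta\le\partial_t h(x_0,t_0)\le E(h(x_0,t_0),t_0)\le 0$, a contradiction.

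The main obstacle is conceptual rather than technical: one must recognise that hypothesis~\eqref{E} has been crafted exactly to replace ellipticity at the key step of the standard argument, even though $E$ is only assumed to be transversally elliptic (and may involve derivatives of order higher than two). Once the complex Hessian of a basic function at its spatial maximum is shown to be $\le 0$, the proof becomes a direct adaptation of the Euclidean parabolic maximum principle and no further Sasakian input is needed.
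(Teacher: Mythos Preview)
Your argument is correct and follows essentially the same route as the paper: subtract $\delta t$ (the paper calls it $\lambda t$), locate a maximum on $M\times[0,T]$, and derive a contradiction from $\partial_t h\ge\delta$ together with $E(h)\le0$ at that point. The only difference is cosmetic---the paper simply asserts that $dd^c h_\lambda$ is nonpositive at the maximum, whereas you spell out the standard identification of the complex Hessian with a $J$-average of the real Hessian; this extra line is a welcome clarification but does not change the strategy.
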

\begin{proof}
%Let $(x_0,t_0)$ be a point in $M\times [0,\epsilon)$ where $h$ achieves its global maximum and assume by contradiction that $t_0>0$
Fix $\epsilon_0\in (0,\epsilon)$ and let $h_\lambda\colon M\times [0,\epsilon_0]\to \R$ be the map 
$h_\lambda(x,t)=h(x,t)-\lambda t$. Assume that $h_\lambda$ achieves its global maximum at $(x_0,t_0)$ and assume by contradiction that 
$t_0>0$. Then $\partial_th_\lambda(x_0,t_0)\geq 0$ and $dd^ch_\lambda(x_0,t_0)$ is nonpositive.  Therefore condition \eqref{E} implies $E(h_\lambda(x_0,t_0),t_0)\leq 0$ and consequently 
$$
\partial_th_\lambda(x_0,t_0)-E(h_\lambda(x_0,t_0),t_0)\geq 0. 
$$
Since $\partial_th_\lambda=\partial_th-\lambda$ and $E(h_\lambda(x,t),t)=E(h(x,t),t)$, we have 
$$
0\leq \partial_th(x_0,t_0)-E(h(x_0,t_0),t_0)-\lambda \leq -\lambda,
$$
which is a contradiction. Therefore $h_\lambda$ achieves its global maximum at a point $(x_0,0)$ and  
$$
\sup_{M\times [0,\epsilon_0]} h\leq \sup_{M\times [0,\epsilon_0]} h_\lambda+\lambda\epsilon_0\leq \sup_{x\in M} h(x,0)+\lambda\epsilon_0.
$$
Since the above inequality holds for every $\epsilon_0\in (0,\epsilon)$ and $\lambda>0$, the claim follows. 
%
%
%let $(x_0,t_0)$ be a point in $M\times [0,\epsilon)$ where $h\colon M\times [0,\epsilon_0]$ achieves its global maximum. Assume by contradiction that $t_0>0$ and for a positive $s$ define 
%$h_s(x,t)=h(x,t)-s t$. Since  $\partial_th(x_0,t_0)=0$ and ${\rm Hess}(h)$ is semi-negative defined at $(x_0,t_0)$, condition \eqref{E} implies 
%$$
%0\leq\partial_th_s(x_0,t_0)-E(h_s(x_0,t_0),t_0)=-E(h(x_0,t_0),t_0)-s\leq -\epsilon.
%$$
%The contradiction implies that $t_0=0$. Thus,
%$${\rm sup}_{(x,t)\in M\times [0,T)} h(x,t)\leq {\rm sup}_{(x,t)\in M\times [0,T)} h_\epsilon(x,t)+\epsilon T_0\leq {\rm sup}_{x\in M} h(x,0)+\epsilon T_0.$$
%Conclusion follows by taking $\epsilon \to 0$ and since $T_0$ is arbitrary.
\end{proof} 

A similar result can be stated for tensors:
\begin{prop}[Maximum principle for basic tensors]\label{maxtensors}
Let $\kappa$ be a smooth curve of basic $(1,1)$-forms on $M$  for $t\in [0,\epsilon)$. Assume $\kappa$ nonpositive and such that 
$$
\partial_t\kappa_{i\bar j}(x,t)-E(\kappa_{i\bar j}(x,t),t)=N_{i\bar j}(x,t),
$$  
where $N$is a nonpositive basic form and the components are with respect to foliated coordinates. Then  $\kappa$ is nonpositive for every $t\in [0,\epsilon)$.
\end{prop}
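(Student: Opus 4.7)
The plan is to mimic the scalar argument of Proposition~\ref{maximum}, adapted to tensors by reducing to a scalar function via contraction with a critical eigendirection at the would-be first failure point, and to introduce a perturbation that promotes the nonstrict inequality $N\le 0$ to a strict one.

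First I would argue by contradiction. Set
\[
t_0 := \inf\bigl\{t \in [0,\epsilon) : \kappa(\cdot,t) \text{ has a positive eigenvalue somewhere on } M\bigr\}.
\]
By continuity, compactness of $M$, and the assumption that $\kappa(\cdot,0)$ is nonpositive, one gets $t_0 > 0$, $\kappa(\cdot,t) \le 0$ for every $t \in [0,t_0]$, and there exist $x_0 \in M$ and a unit $(1,0)$-vector $v_0 \in \mathcal{D}^{1,0}_{x_0}$ with $\kappa_{i\bar j}(x_0,t_0)\,v_0^i\bar v_0^j = 0$. Choosing special foliated coordinates at $x_0$ so that $g^T_{i\bar j}(x_0)=\delta_{ij}$ and $v_0=\partial_{z^n}|_{x_0}$, the basic scalar function $u(x,t):=\kappa_{n\bar n}(x,t)$ attains a local maximum (one-sided in $t$) with value $0$ at $(x_0,t_0)$. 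Therefore $\partial_t u(x_0,t_0)\ge 0$ and $dd^c u(x_0,t_0)\le 0$, so by hypothesis~\eqref{E} also $E(u)(x_0,t_0)\le 0$. The $(n,\bar n)$-component of the evolution equation then yields $0 \le \partial_t u - E(u) = N_{n\bar n} \le 0$ at $(x_0,t_0)$, which is \emph{not} yet a contradiction.

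To break the tie, I would consider the perturbation $\kappa^\delta := \kappa - \delta\, e^{Lt}\, g^T$ for small $\delta>0$, with $L>0$ chosen (using the compactness of $M$) so that the Hermitian form $L\,g^T_{i\bar j} - E(g^T_{i\bar j})$ is pointwise positive definite on $M\times[0,\epsilon')$ for any fixed $\epsilon'<\epsilon$. A direct computation gives
\[
\partial_t \kappa^\delta_{i\bar j} - E(\kappa^\delta_{i\bar j}) = N_{i\bar j} - \delta\, e^{Lt}\bigl(L\,g^T_{i\bar j} - E(g^T_{i\bar j})\bigr) =: N^\delta_{i\bar j},
\]
where $N^\delta$ is now \emph{strictly} negative as a Hermitian form; moreover $\kappa^\delta(\cdot,0) = \kappa(\cdot,0) - \delta g^T$ is strictly negative definite. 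Running the previous argument with $\kappa^\delta$ in place of $\kappa$, at the first failure point $(x_0^\delta,t_0^\delta)$ one obtains $0 \le \partial_t u^\delta - E(u^\delta) = N^\delta_{n\bar n}(x_0^\delta,t_0^\delta) < 0$, a genuine contradiction. Hence $\kappa^\delta \le 0$ on $[0,\epsilon')$ for every sufficiently small $\delta>0$, and letting $\delta \to 0^+$ and then $\epsilon'\uparrow\epsilon$ gives $\kappa\le 0$ on $[0,\epsilon)$.

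The main obstacle is the scalar-versus-tensor mismatch: hypothesis~\eqref{E} is a statement about scalar basic functions, while the object to be controlled is the Hermitian tensor $\kappa$. Picking adapted foliated coordinates at a potential failure point and tracking only the critical diagonal component $\kappa_{n\bar n}$ is what makes \eqref{E} applicable, and using the background transverse K\"ahler form $g^T$ (rather than a time perturbation alone) as the source of strictness ensures that the added slack in the evolution equation is of the same type as the original PDE, so the strict contradiction transfers cleanly.
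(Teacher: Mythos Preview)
Your argument is correct and follows essentially the same route as the paper's: perturb $\kappa$ by a small positive multiple of a background transverse K\"ahler form, reduce to a scalar along the critical eigendirection, and obtain a contradiction at the first failure point. The only difference is the choice of perturbation---the paper uses the linear-in-$t$ form $\kappa_\lambda=\kappa-t\lambda\,d\eta$ (with $Z$ extended as a unitary basic vector field so that $d\eta(Z,\bar Z)$ is constant and the $E$-contribution of the perturbation vanishes), whereas you use $\kappa^\delta=\kappa-\delta\,e^{Lt}g^T$ and absorb the extra term $E(g^T_{i\bar j})$ by taking $L$ large.
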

\begin{proof}
The proof is very similar to the case of functions. We show that for every positive $\lambda$, $\kappa_{\lambda}=\kappa-t\lambda d\eta$ is nonpositive. Assume by contradiction that this is not true. Then there exists a $\lambda$, a first point $(x_0,t_0)\in M\times [0,\epsilon)$ and $g$-unitary $(1,0)$-vector $Z\in \mathcal{D}_{x_0}^{1,0}$ such that $\kappa_{\lambda}(Z,\bar Z)=0$. We extend $Z$ to a basic and unitary vector field in a small enough  neighborhood $U$ of $x$ and consider the map $f_{\lambda}\colon U\times [0,t_0]\to \R$ given by 
$f_{\lambda}=\kappa_{\lambda}(Z,\bar Z)$. Then $f_\lambda$ has a maximum at $(x_0,t_0)$ and so 
$$
\partial_tf_{\lambda}\geq 0\,,\quad E(f_{\lambda}(x_0,t_0),t_0)\leq 0,
$$
at $(x_0,t_0)$. Now since 
$$
E(f_{\lambda}(x,t),t)=E(f_0(x,t),t),
$$
we have 
$$
0\leq\partial_t(f_\lambda)=E(f_\lambda,\cdot)+N(Z,\bar Z)-\frac\lambda2\leq 0,
$$
at $(x_0,t_0)$, which implies 
$$
N(Z,\bar Z)\geq \frac\lambda2 ,
$$
at $(x_0,t_0)$, which is a contradiction. 
\end{proof}
In the following we will apply the two propositions when $E$ is the operator $\tilde \Delta_f$ depending on a smooth curve $f$ in $\mathcal{H}$ defined by: 
$$
\tilde \Delta_f(h,t)=g_f^{\bar k p}g_f^{\bar q j}\chi_{j\bar k} h_{,a\bar b}\,. 
$$

\section{Second order estimates}\label{estimates}
The following two lemmas provide the a priori estimates we need to prove the main theorem.   
%First of all we recall the following 
%\begin{theorem}[Maximum principle]
%Let  $0<\epsilon\leq \infty$ and let $f\colon M \times [0, \epsilon)\to \R$ satisfying the differential inequality
%$$
%\partial_t f(x,t) -Q(f(x,t),t)\leq 0.
%$$
%Then 
%$$
%\sup_{(x,t)\in M \times [0,\epsilon )} f (x, t) \leq \sup_{x\in M} f (x, 0)\,.
%$$

\begin{lemma}\label{soe}
Let $f\colon M\times  [0,\epsilon)\to \R$ be a solution to \eqref{jflow}, with $\epsilon<\infty$. 
Then 
$$
\sigma_f\leq \min_{x\in M}\sigma_f(x,0)
$$
and there exists a  uniform constants $C$, depending only on $f_0$, such that 
$$
\gamma_f(x,t)\leq \sup_{x\in M} \gamma_f(x,0)\,{\rm e}^{C\epsilon}
$$
where $\gamma_f=\chi^{\bar j k}(g_f)_{k\bar j}$.
\end{lemma}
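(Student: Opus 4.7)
The plan is to turn both claimed estimates into parabolic differential inequalities for basic quantities, with the transversally elliptic operator $\tilde\Delta_f$ of Section~\ref{maxprin} playing the role of the heat operator, and then to conclude by Proposition~\ref{maximum}. The key observation that makes the argument go through in the Sasaki setting is the local picture described in Section~\ref{foliatedcoordinates}: in foliated coordinates $(z^1,\dots,z^n,z)$ adapted to $\chi$, the Sasaki $J$-flow reduces, on each chart, to the K\"ahler $J$-flow treated by Chen in \cite{chen}, so that the local computations used in that paper transfer without change.

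For the bound on $\sigma_f$, I differentiate $\sigma_f=g_f^{\bar kp}\chi_{p\bar k}$ in time and substitute $\dot f=c-\sigma_f$ into \eqref{sigma_t}; since $\chi$ is basic and independent of $z$ the only time dependence in $\sigma_f$ comes from $g_f^{-1}$, and one finds
$$
\partial_t\sigma_f=\tilde\Delta_f\sigma_f.
$$
The function $\sigma_f$ is basic and the operator $\tilde\Delta_f$ satisfies the structural hypothesis \eqref{E}, so Proposition~\ref{maximum} applied to $\pm\sigma_f$ yields the two-sided bound $\min_M\sigma_f(\cdot,0)\le\sigma_f(\cdot,t)\le\max_M\sigma_f(\cdot,0)$, and in particular the inequality stated in the lemma (the lower bound is the one needed later in order to keep $c-\sigma_f$ controlled and $g_f$ uniformly transverse--positive).

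For the bound on $\gamma_f$, I work near an arbitrary point $x_0$ in special foliated coordinates in which $\chi_{i\bar j}=\delta_{ij}$, $\partial_r\chi_{i\bar j}=0$ at $x_0$, and $(g_f)_{i\bar j}$ is diagonal at $x_0$. Because $f$ and the components of $\chi$ are basic, neither $\gamma_f$ nor the matrix $(g_f)_{i\bar j}$ depends on $z$, so the computation of $(\partial_t-\tilde\Delta_f)\log\gamma_f$ along \eqref{jflow} is, term by term, the one Chen carries out in \cite{chen} for the K\"ahler $J$-flow. One obtains
$$
(\partial_t-\tilde\Delta_f)\log\gamma_f\le C,
$$
for a constant $C$ depending only on an upper bound of the transverse holomorphic bisectional curvature of the (fixed) $\chi$ and on the already-established bound for $\sigma_f$, i.e.\ ultimately only on $f_0$. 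Applying Proposition~\ref{maximum} to the basic function $\log\gamma_f-Ct$ gives
$$
\log\gamma_f(x,t)\le\sup_{x\in M}\log\gamma_f(x,0)+Ct\le\sup_{x\in M}\log\gamma_f(x,0)+C\epsilon,
$$
and exponentiating yields the desired inequality.

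The main obstacle is the second step, namely checking that Chen's K\"ahler computation for $(\partial_t-\tilde\Delta)\log\gamma_f$ really does transfer to the Sasakian context. The potential danger is that commutators involving the Reeb field $\xi$, or the non-basic part of the Levi-Civita connection, could generate extra terms not present in the K\"ahler case. The foliated coordinates of Section~\ref{foliatedcoordinates} neutralise this: in such coordinates every quantity entering the calculation is a derivative of a basic object along a transverse holomorphic direction, so no $\partial_z$-derivatives ever appear and Chen's manipulation proceeds verbatim. The only genuinely new ingredient is then the basic maximum principle of Section~\ref{maxprin}, which replaces the classical Euclidean one used in \cite{chen} and allows the pointwise local inequality to be globalised over the compact foliated manifold $M$.
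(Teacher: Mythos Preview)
Your strategy matches the paper's: derive parabolic inequalities for $\sigma_f$ and for $\gamma_f$ with respect to $\partial_t-\tilde\Delta_f$, then apply the transverse maximum principle of Proposition~\ref{maximum}. The treatment of $\sigma_f$ is identical to the paper's.

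For $\gamma_f$ there is a small but real difference. The paper does \emph{not} take a logarithm. It computes $(\partial_t-\tilde\Delta_f)\gamma_f$ explicitly in special foliated coordinates (formula \eqref{dotfab} and what follows), discards the nonpositive third-order term, bounds the two curvature contributions using compactness of $M$ together with the already-obtained upper bound on $\sigma_f$, and arrives at
\[
(\partial_t-\tilde\Delta_f)\gamma_f\le C\gamma_f+C,
\]
after which Proposition~\ref{maximum} is applied to the auxiliary function $F=e^{-Ct}\gamma_f-Ct$. Your route via $\log\gamma_f$ reaches the same destination, but the claim that it is ``term by term the one Chen carries out'' is not quite accurate: Chen in \cite{chen}, and this paper, work with the trace $\gamma_f$ directly. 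Passing to $\log\gamma_f$ introduces an extra nonnegative gradient term on the right-hand side, which must either be absorbed into the good negative third-order term by a weighted Cauchy--Schwarz argument (using the transverse K\"ahler identity $(g_f)_{a\bar b,c}=(g_f)_{c\bar b,a}$), or observed to vanish at a spatial maximum. Either fix is routine, but it is a step you would have to supply rather than cite.
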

%\begin{lemma}\label{dotfmaxmin}
%Let $f\colon M\times  [0,\epsilon)\to \R$ be a solution of \eqref{jflow}.
%Then, 
%
%% Then $\dot f$ is bounded from above and below.
%\end{lemma}
\begin{proof}
The upper bound of $\sigma_f$ easily follows from the definition of $J_{\chi}$ and Proposition \ref{maximum}. 
Indeed, differentiating  \eqref{jflow} in $t$ we have
$\ddot f=-\partial_t\sigma_f=g_f^{\bar a b}g_f^{\bar k j} \chi_{b\bar k}\dot f_{,j\bar a}=-\tilde \Delta_f\sigma_f,$ i.e.,
$$
\partial_t\sigma_f=\tilde \Delta_f\sigma_f
$$  
and Proposition \ref{maximum} implies the first inequality.
About the upper bound of $\gamma_f$, we have 
$$
\partial_t\gamma_f=\chi^{\bar j k}\partial_t\left[(g_f)_{k\bar j}\right]=\chi^{\bar j k}\dot f_{,k\bar j}.
$$
Since $f$ solves \eqref{jflow}, we have 
%\b\label{deasigma}
%\partial_a\sigma_f=\partial_a\left[ (g^T_f)^{\bar k j}\chi_{j\bar k}\right]=-(g_f^T)^{\bar k p}(g_f^T)_{p\bar q,a}(g_f^T)^{\bar q j}\chi_{j\bar k}+(g_f^T)^{\bar k j}\chi_{j\bar k,a}
%\e
%and 
$$
\dot f_{,a}=g_f^{\bar k p}(g_f)_{p\bar q,a}g_f^{\bar q j}\chi_{j\bar k}-g_f^{\bar k j}\chi_{j\bar k,a}
$$
and 
%\begin{equation}\label{dotfab}
%\begin{split}
%\dot f_{a\bar b}=&-2g_f^{\bar k s}(g_f)_{\bar rs,\bar b}g_f^{\bar r p}(g_f)_{p\bar q,a}g_f^{\bar q j}\chi_{j\bar k}+g_f^{\bar k p}(g_f)_{p\bar q,a\bar b}g_f^{\bar q j}\chi_{j\bar k}+\\
%&+g_f^{\bar k p}(g_f)_{p\bar q,a}g_f^{\bar q j}\chi_{j\bar k,\bar b}+g_f^{\bar k s}(g_f)_{\bar rs,\bar b}g_f^{\bar r j}\chi_{j\bar k,a}-g_f^{\bar k j}\chi_{j\bar k,a\bar b}.
%\end{split}
%\end{equation}
%i.e. 
\begin{equation}\label{dotfab}
\begin{split}
\dot f_{,a\bar b}=&-2g_f^{\bar k s}(g_f)_{\bar rs,\bar b}g_f^{\bar r p}(g_f)_{p\bar q,a}g_f^{\bar q j}\chi_{j\bar k}+\tilde \Delta_f[(g_f)_{a\bar b}]\\
&+g_f^{\bar k p}(g_f)_{p\bar q,a}g_f^{\bar q j}\chi_{j\bar k,\bar b}+g_f^{\bar k s}(g_f)_{\bar rs,\bar b}g_f^{\bar r j}\chi_{j\bar k,a}-g_f^{\bar k j}\chi_{j\bar k,a\bar b}.
\end{split}
\e

Let $R^T=R^T(\chi)$ be the transverse curvature of $\chi$ and ${\rm Ric}^T(\chi)$ its transverse Ricci tensor (see section \ref{preliminaries}). The components of  
$R^T$ with respect to foliated coordinates read as  $R_{j\bar k a\bar b}^T=-\chi_{j\bar k, a\bar b}+\chi^{\bar p q}\chi_{j\bar p, a}\chi_{q\bar k, \bar b}$. 

%{\color{blue}and consider all differentials as covariant derivatives with respect to $\chi$. }
Fix a point $(x_0,t_0)\in M\times[0,\epsilon)$ and {\em special foliated coordinates} for $\chi$ around it (see subsection \ref{foliatedcoordinates}). We may further assume without loss of generality that  $(g_f)_{j\bar k}=\lambda_j\delta_{j k}$ at $(x_0,t_0)$. Then 
%\begin{equation}\label{dotfab1}
%\begin{split}
%\dot f_{a\bar b}=\,&\sum_{k,r=1}^n\left(\frac{-2}{\lambda_k^2\lambda_r}(g_f)_{\bar rk,\bar b}(g_f)_{r\bar k,a} +\frac{1}{\lambda_k\lambda_r}(g_f)_{k\bar r,a}\chi_{r\bar k,\bar b}+\frac{1}{\lambda_k\lambda_r} (g_f)_{\bar rk,\bar b}\chi_{r\bar k,a}\right)\\
%&+
%\sum_{k=1}^n \left(\frac{1}{\lambda_k^2}(g_f)_{k\bar k,a\bar b}
%-\frac{1}{\lambda_k}\chi_{k\bar k,a\bar b}\right).
%\end{split}
%\end{equation}
%at $(x,t_0)$. Then we have

\begin{equation}\label{dotfab1}
\begin{split}
\dot f_{,a\bar b}=\,&\sum_{k,r=1}^n \frac{-2}{\lambda_k^2\lambda_r}(g_f)_{\bar rk,\bar b}(g_f)_{r\bar k,a} 
+\tilde \Delta_f[(g_f)_{a\bar b}]
-\sum_{k=1}^n 
\frac{1}{\lambda_k}\chi_{k\bar k,a\bar b}\quad\mbox{ at } (x_0,t_0)
\end{split}
\end{equation}
and 
%$$
%\left({\rm tr}_\chi(g^T_f)\right)_{p\bar q}=\chi^{\bar b a}(g_f^T)_{a\bar b,p\bar q}-\chi^{\bar b s}\chi^{\bar r a}(g_f^T)_{a\bar b}R^T(\chi)_{s\bar rp\bar q},
%$$
%thus:
%\begin{equation}
%\begin{split}
%\partial_t\left(\gamma_f\right)=\sum_{a=1}^n\dot f_{a\bar a}=& \sum_{k,r=1}^n\left(\frac{-2}{\lambda_k^2\lambda_r}|(g_f)_{\bar rk,\bar a}|^2+\frac{1}{\lambda_k\lambda_r}(g_f)_{k\bar r,a}\chi_{r\bar k,\bar a}+\frac{1}{\lambda_k\lambda_r} (g_f)_{\bar rk,\bar a}\chi_{r\bar k,a}\right)\\
%&+
%\sum_{k=1}^n \left(\frac{1}{\lambda_k^2}(g_f)_{k\bar k,a\bar a}
%-\frac{1}{\lambda_k}\chi_{k\bar k,a\bar a}\right).
%%&-(g_f^T)^{\bar k p}(g_f^T)^{\bar q j}\chi_{j\bar k}\chi^{\bar b s}R^T(\chi)_{s\bar rp\bar q}\chi^{\bar r a}(g_f^T)_{a \bar b}+(g_f^T)^{\bar k j}\chi^{\bar b a}R^T(\chi)_{j\bar ka\bar b}.
%\end{split}\nonumber
%\end{equation}

\begin{equation}
\begin{split}
\partial_t\gamma_f=\sum_{a=1}^n\dot f_{,a\bar a}=
\sum_{a=1}^n\left[\sum_{k,r=1}^n \frac{-2}{\lambda_k^2\lambda_r}|(g_f)_{k\bar r,\bar a} |^2
+\tilde \Delta_f[(g_f)_{a\bar a}]
-\sum_{k=1}^n \frac{1}{\lambda_k}\chi_{k\bar k,a\bar a}\right] \quad\mbox{ at } (x_0,t_0)\,. 
%&-(g_f^T)^{\bar k p}(g_f^T)^{\bar q j}\chi_{j\bar k}\chi^{\bar b s}R^T(\chi)_{s\bar rp\bar q}\chi^{\bar r a}(g_f^T)_{a \bar b}+(g_f^T)^{\bar k j}\chi^{\bar b a}R^T(\chi)_{j\bar ka\bar b}.
\end{split}\nonumber
\end{equation}
 i.e. 
\begin{equation*}
\partial_t\gamma_f=
\sum_{a=1}^n\left(\sum_{k,r=1}^n \frac{-2}{\lambda_k^2\lambda_r}|(g_f)_{k\bar r,\bar a} |^2
+\tilde \Delta_f[(g_f)_{a\bar a}]\right)
-\sum_{k=1}^n\frac{1}{\lambda_k}{\rm Ric}^T_{k\bar k} \quad\mbox{ at } (x_0,t_0)\,.
%&-(g_f^T)^{\bar k p}(g_f^T)^{\bar q j}\chi_{j\bar k}\chi^{\bar b s}R^T(\chi)_{s\bar rp\bar q}\chi^{\bar r a}(g_f^T)_{a \bar b}+(g_f^T)^{\bar k j}\chi^{\bar b a}R^T(\chi)_{j\bar ka\bar b}.
\end{equation*}
Now a direct computation yields 
$$
\tilde \Delta_f\gamma_f=\sum_{a=1}^n \tilde \Delta_f[(g_f)_{a\bar a}]-\sum_{a,k=1}^n\frac{\lambda_a}{\lambda_k^2} R^T_{a\bar ak\bar k}  \quad\mbox{ at } (x_0,t_0)
$$
and therefore 
$$
\partial_t\gamma_f-\tilde \Delta_f\gamma_f=\sum_{a,k=1}^n\left(\sum_{r=1}^n \frac{-2}{\lambda_k^2\lambda_r}|(g_f)_{k\bar r,\bar a} |^2
+\frac{\lambda_a}{\lambda_k^2} R^T_{a\bar ak\bar k}\right)
-\sum_{k=1}^n \frac{1}{\lambda_k}{\rm Ric}^T_{k\bar k} \quad\mbox{ at } (x_0,t_0)\,.
$$
Observe that 
$$
\sum_{k=1}^n\frac1{\lambda_k}=\sigma_f(x_0,t_0)\leq C_1,\qquad \sum_{k=1}^n\lambda_k=\gamma_f(x_0,t_0),
$$
where $C_1=\min_{x\in M}\sigma_{f}(x,0)$. 
Thus for all $k=1,\dots, n$ we have 
$$
\frac1{\lambda_k}\leq C_1, \qquad \lambda_k\leq \gamma_f(x_0,t_0)\,.
$$
Since $M$ is compact, there exists a constant $C_2$ such that ${\rm Ric}^T-C_2\chi$ is nonnegative and therefore at $(x_0,t_0)$ we have
$$
|\frac{1}{\lambda_k}{\rm Ric}^T_{k\bar k}|\leq nC_1C_2\,,
\quad |\sum_{a,k=1}^n\frac{\lambda_a}{\lambda_k^2} R^T_{a\bar ak\bar k}|\leq C_1^2 |\sum_{a=1}^n\lambda_a {\rm Ric}^T_{a\bar a}|
\leq nC_1^2C_2\gamma_f,
$$
%where the last equality holds due to ${\rm Ric}_{a\bar a}\leq C_2\chi_{a\bar a}$ for some constant $C_2$,
%and
%$$
%|g_f^{\bar k j}{\rm Ric}_{j\bar k}|\leq C (g_f^T)^{\bar k j}\chi_{j\bar k}=C\sigma_f\leq C_2,
%$$
Thus there exists a constant $C$ such that
$$
\partial_t\gamma_f-\tilde \Delta_f\gamma_f\leq C\gamma_f+C.
$$
Let $F:={\rm e}^{-Ct}\gamma_f -Ct$. Then
$$
\partial_t F-\tilde \Delta_f F=e^{-Ct}\left(-c\gamma_f+\partial_t\gamma_f-\tilde \Delta \gamma_f\right)-C,
$$
and by Proposition \ref{maximum} we have  
$$
\sup_{(x,t)\in M\times [0,\epsilon)}F\leq \sup_{x\in M}F(x,0)=\sup_{x\in M} \gamma_f(x,0),
$$
which implies
$$
\sup_{(x,t)\in M\times [0,\epsilon)}\gamma_t=\sup_{x\in M} \gamma_f(x,0)e^{C\epsilon}
$$
as required. 
%\begin{equation}
%\begin{split}
%\partial_t h\leq&-2(g_f^T)^{\bar k s}(g_f^T)_{\bar rs,\bar b}(g_f^T)^{\bar r p}(g_f^T)_{p\bar q,a}(g_f^T)^{\bar q j}\chi^{\bar b a}\chi_{j\bar k}+(g_f^T)^{\bar k p}(g_f^T)^{\bar q j}\chi_{j\bar k}(h)_{p\bar q}+\\
%&+C(\sigma_f+\sigma_f^2 h)+\xi^2(h).
%\end{split}
%\end{equation}
%{\color{red}Non funziona qualcosa}\\
\end{proof}

In order to get a uniform lower bound for $d\eta_f$ we need to add an hypothesis on the bisectional curvature of $\chi$ (see Theorem \ref{bisectional} below). Observe that the existence of a uniform lower bound without further assumption would imply the existence of a critical metric in $\mathcal{H}_0$ for each choice of $\eta$ and $\chi$, in contrast with the necessary condition $\frac c2d\eta_f-\chi>0$.

%\begin{cor}
%Let $f$ be a solution to \eqref{} and let $g_f=(g_f)_{r\bar s} dz^rd\bar z^{s}$be the induced path of transverse metric.  
%\end{cor}

\begin{theorem}\label{bisectional}
Assume that the transverse bisectional curvature of $\chi$ is nonnegative
and let $f\colon M\times [0,\epsilon)\to \R$ be a solution to \eqref{jflow}. Then there exists  constant $C$ depending only on the initial datum $f_0$ such that $C\chi-d\eta_f$ is a transverse K\"ahler form 
for every $t\in [0,\epsilon)$. 
\end{theorem}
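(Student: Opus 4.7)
The plan is to adapt Chen's $C^2$-estimate for the K\"ahler $J$-flow under nonnegative bisectional curvature to the transverse setting, invoking Proposition~\ref{maximum} in place of the classical parabolic maximum principle. Since the eigenvalues $\lambda_1,\dots,\lambda_n$ of $d\eta_f$ with respect to $\chi$ are positive and sum to $\gamma_f=\chi^{\bar jk}(g_f)_{k\bar j}$, a uniform upper bound $\gamma_f\leq C'$ depending only on $f_0$ automatically bounds each $\lambda_i$ above by $C'$, so that $C:=C'+1$ makes $C\chi-d\eta_f$ a transverse K\"ahler form. The task is therefore reduced to producing a time-independent pointwise upper bound on $\gamma_f$, and the estimate of Lemma~\ref{soe} already shows that this is the only missing piece.

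The auxiliary basic function to consider is $H=\log\gamma_f-Af$ for a large constant $A>0$ to be chosen. Differentiating and using the evolution of $\gamma_f$ computed in the proof of Lemma~\ref{soe} one obtains
$$
\partial_t H-\tilde\Delta_f H=\frac{\partial_t\gamma_f-\tilde\Delta_f\gamma_f}{\gamma_f}+\frac{|\tilde\nabla\gamma_f|^2_f}{\gamma_f^2}-A(\dot f-\tilde\Delta_f f).
$$
At a space-time maximum $(x_0,t_0)$ of $H$ with $t_0>0$, Proposition~\ref{maximum} forces $\partial_t H-\tilde\Delta_f H\geq 0$; I would evaluate at $x_0$ in the special foliated coordinates of Section~\ref{foliatedcoordinates} with $\chi_{i\bar j}=\delta_{ij}$ and $(g_f)_{i\bar j}=\lambda_i\delta_{ij}$. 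The third-order nonpositive contribution $-\sum_{a,k,r}\frac{2}{\lambda_k^2\lambda_r}|(g_f)_{k\bar r,\bar a}|^2$ appearing in $\partial_t\gamma_f-\tilde\Delta_f\gamma_f$ absorbs the positive gradient term $|\tilde\nabla\gamma_f|_f^2/\gamma_f^2$ arising from the logarithm, via a weighted Cauchy--Schwarz inequality applied to $(\gamma_f)_{,p}=\sum_k(g_f)_{k\bar k,p}$, exactly as in the K\"ahler calculation of \cite{chen}.

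The decisive step is the treatment of the curvature term in $\partial_t\gamma_f-\tilde\Delta_f\gamma_f$. Using $\mathrm{Ric}^T_{k\bar k}=\sum_a R^T_{a\bar a k\bar k}$ and the symmetry $R^T_{a\bar a k\bar k}=R^T_{k\bar k a\bar a}$, it rewrites after symmetrisation as
$$
\sum_{a,k}\frac{\lambda_a-\lambda_k}{\lambda_k^2}R^T_{a\bar a k\bar k}=\tfrac12\sum_{a,k}\frac{(\lambda_a-\lambda_k)^2(\lambda_a+\lambda_k)}{\lambda_a^2\lambda_k^2}R^T_{a\bar a k\bar k}.
$$
Under nonnegative transverse bisectional curvature this is nonnegative and bounded above by $K\bigl(2\gamma_f\sum_k\lambda_k^{-2}-2n\sigma_f\bigr)\leq 2KC_1^2\gamma_f$, where $K$ is an upper bound for the bisectional curvature and $C_1=\sup_M\sigma_{f_0}$ comes from Lemma~\ref{soe}. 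Dividing by $\gamma_f$ transforms this into a constant independent of $\gamma_f$, which is what makes the maximum principle argument for $H$ effective.

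Combining all of the above with $\dot f-\tilde\Delta_f f=c-2\sigma_f+\sum_k g_{k\bar k}\lambda_k^{-2}$, one arrives at an inequality of the form $0\leq C_2-A(\dot f-\tilde\Delta_f f)$ at $(x_0,t_0)$, and for $A$ suitably large this forces $\gamma_f(x_0,t_0)\leq C_3$. The main obstacle is then to convert $\sup H\leq\log C_3$ into a uniform pointwise bound on $\gamma_f$: this requires a time-independent $C^0$ estimate on $f$, which, as in the K\"ahler case \cite{chen}, can be obtained via a Moser-type iteration in the basic setting, using the normalisation $I(f)=0$, the decreasing character of the energy $\mathrm{En}$ established in Proposition~\ref{stimevarie}, and the bound on $\sigma_f$ from Lemma~\ref{soe}.
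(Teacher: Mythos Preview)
Your scalar route via $H=\log\gamma_f-Af$ departs from the paper (and from Chen \cite{chen}) and has a genuine gap. The paper does not bound the trace $\gamma_f$ at all in this theorem; it applies the \emph{tensor} maximum principle of Proposition~\ref{maxtensors} directly to $\kappa=\tfrac12\,d\eta_f-C\chi$, with $C$ chosen so that $\kappa\le 0$ at $t=0$. In special foliated coordinates for $\chi$ with $Z=\partial_{z^1}$, the reaction term in the evolution of $\kappa$ is
\[
N(Z,\bar Z)=-2\sum_{k,r}\frac{1}{\lambda_k^2\lambda_r}\,|(g_f)_{k\bar r,\bar 1}|^2-\sum_{k}\frac{1}{\lambda_k^2}\,R^T(\chi)_{k\bar k1\bar 1}\,,
\]
and nonnegative transverse bisectional curvature makes this nonpositive outright, so $\kappa$ stays nonpositive for all $t$. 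No $C^0$ estimate on $f$, no $-Af$ corrector, and no Moser iteration are needed; the constant $C$ depends only on $f_0$ by construction.

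The concrete gap in your outline is the step ``for $A$ suitably large this forces $\gamma_f(x_0,t_0)\le C_3$''. The quantity $\dot f-\tilde\Delta_f f=c-2\sigma_f+\sum_k g_{k\bar k}\lambda_k^{-2}$ is not coercive in $\gamma_f$: if one eigenvalue $\lambda_j\to\infty$ while the others stay of unit size (which is perfectly compatible with $\sigma_f\le C_1$), then $\sum_k g_{k\bar k}\lambda_k^{-2}$ remains bounded and $c-2\sigma_f$ is merely bounded below, so the inequality $A(\dot f-\tilde\Delta_f f)\le C_2$ carries no information about $\gamma_f$. More tellingly, your upper bound $2KC_1^2\gamma_f$ on the curvature contribution is, up to constants, exactly the bound already obtained in the proof of Lemma~\ref{soe} \emph{without} any sign hypothesis on $R^T$; so the nonnegativity of the bisectional curvature has not been used in an essential way. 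That hypothesis is decisive precisely at the tensor level, where it forces $N\le 0$, and this information is lost once one passes to the trace $\gamma_f$. The appeal to a Moser-type $C^0$ estimate at the end is an additional nontrivial step that the paper's argument avoids entirely.
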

\begin{proof}
Let  $\kappa=\frac12 d\eta_f-C\chi$ 
%be the time-dependent  basic $(1,1)$-form 
%$$
%\kappa_{a\bar b}=(g_f)_{a\bar b}-C_0\chi_{a\bar b}=(g_f)_{a\bar b}-C_0\chi_{a\bar b}=(g_0)_{a\bar b}+f_{a\bar b}-C_0\chi_{a\bar b},
%$$
%where $\{z^k,z\}$ are foliated coordinates on $M$ and the 
where $C$ is a constant chosen big enough to have $\kappa$ nonpositive  at $t=0$. 
Then $\kappa$ is a time-dependent basic $(1,1)$-form which is nonpositive at $t=0$. 
We apply  Proposition \ref{maxtensors} to show that $\kappa$ is nonpositive for every $t\in [0,\epsilon)$. 
%Following \cite{chen} we are aimed to show that $\kappa$ remains negative defined for all $t\in [0,\epsilon)$.
Once a system of foliated coordinates $\{z^k,z\}$ is fixed, we have $\partial_t \kappa_{a\bar b}=\dot f_{,a\bar b}$ and formula \eqref{dotfab} implies 
\begin{equation}
\begin{split}
\partial_t \kappa_{a\bar b}=&-2g_f^{\bar k s}(g_f)_{\bar rs,\bar b}g_f^{\bar r p}(g_f)_{p\bar q,a}g_f^{\bar q j}\chi_{j\bar k}+g_f^{\bar k p}(g_f)_{p\bar q,a\bar b}g_f^{\bar q j}\chi_{j\bar k}\\
&+g_f^{\bar k p}(g_f)_{p\bar q,a}g_f^{\bar q j}\chi_{j\bar k,\bar b}+g_f^{\bar k s}(g_f)_{\bar rs,\bar b}g_f^{\bar r j}\chi_{j\bar k,a}-g_f^{\bar k j}\chi_{j\bar k,a\bar b}\\
=&-2g_f^{\bar k s}(g_f)_{\bar rs,\bar b}g_f^{\bar r p}(g_f)_{p\bar q,a}g_f^{\bar q j}\chi_{j\bar k}+\tilde\Delta\left[(g_f)_{a\bar b}\right]\\
&+g_f^{\bar k p}(g_f)_{p\bar q,a}g_f^{\bar q j}\chi_{j\bar k,\bar b}+g_f^{\bar k s}(g_f)_{\bar rs,\bar b}g_f^{\bar r j}\chi_{j\bar k,a}-g_f^{\bar k j}\chi_{j\bar k,a\bar b},
\end{split}\nonumber
\end{equation}
i.e. 
\b\label{segno1}
\begin{aligned}
\partial_t \kappa_{a\bar b}-\tilde\Delta\left[(g_f)_{a\bar b}\right]=\,&-2g_f^{\bar k s}(g_f)_{\bar rs,\bar b}g_f^{\bar r p}(g_f)_{p\bar q,a}g_f^{\bar q j}\chi_{j\bar k}\\
&+g_f^{\bar k p}(g_f)_{p\bar q,a}g_f^{\bar q j}\chi_{j\bar k,\bar b}+g_f^{\bar k s}(g_f)_{\bar rs,\bar b}g_f^{\bar r j}\chi_{j\bar k,a}-g_f^{\bar k j}\chi_{j\bar k,a\bar b}.
\end{aligned}
\e
We apply Proposition \ref{maxtensors} using as $N$ the basic form defined by the right hand part of formula \eqref{segno1}. To this end, we have to show that $N$ is nonpositive. That can be easily done as follows: fix a point $(x,t)\in M\times [0,\epsilon)$ and an arbitrary unitary vector field $Z\in \mathcal D_{x}^{1,0}$. Then we can find foliated coordinates $(z,z^{k})$ around $x$ which are special for $\chi$ and such that: $Z=\partial_{z^1|x}$ and $g_f$ takes a diagonal expression with eigenvalues $\lambda_k$ at $(x,t)$. 
Then we have
%$$
%N_{a\bar b}=-2g_f^{\bar k s}(g_f)_{\bar rs,\bar b}g_f^{\bar r p}(g_f)_{p\bar q,a}g_f^{\bar q j}\chi_{j\bar k}-g_f^{\bar k j}\chi_{j\bar k,a\bar b}
%$$  
%
%$$
%(g_f)_{k\bar j,a}\chi_{j\bar k,\bar b}+g_f^{\bar k s}(g_f)_{\bar rs,\bar b}g_f^{\bar r j}\chi_{j\bar k,a}-g_f^{\bar k j}\chi_{j\bar k,a\bar b}.
%$$

$$
N(Z,\bar Z)=-2\sum_{k,r=1}^n\frac{1}{\lambda_k^2\lambda_r}|(g_f)_{k\bar r,\bar 1}|^2-\sum_{k=1}^n\frac{1}{\lambda_k^2}R^T(\chi)_{k\bar k1\bar 1}
$$  
at $(x,t)$ and the claim follows. 

\end{proof}

\section{Proof of the main theorem}
The proof of Theorem $\ref{main}$ is based on the second order estimates provided in Section \ref{estimates} and on the following result in K\"ahler geometry. 
\begin{theorem}\label{kahler}
Let $B$ be an open ball about $0$ in $\C^n$ and let $\omega, \chi$ be two K\"ahler forms on $B$. Let $f\colon M\times [0,\epsilon)\to \R$ be solution to the K\"ahler $J$-flow 
$$
\dot f=c-g_f^{\bar k r}\chi_{r\bar k},
$$
where $g_f$ is the metric associated to $\omega_f=\omega+dd^cf$.
Assume that $\omega_f$ is uniformly bounded in $B\times[0,\epsilon)$. Then $f$ is $C^{\infty}$-bounded in a small ball about $0$. 
\end{theorem}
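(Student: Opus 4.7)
The plan is to interpret the $J$-flow equation as a uniformly parabolic fully nonlinear equation of the form $\dot f = F(D^2 f, x)$, where
\[
F(A,x) = c - \operatorname{tr}\bigl((g(x)+A)^{-1}\chi(x)\bigr),
\]
with $g$ denoting the metric underlying $\omega$, and then to apply standard parabolic regularity theory along the lines already used by Chen in \cite{chen}. First, I would exploit the hypothesis that $\omega_f = \omega + dd^c f$ is uniformly bounded on $B \times [0,\epsilon)$; in the context of the $J$-flow this gives two-sided bounds $c_1\omega \le \omega_f \le c_2\omega$, which translates into uniform parabolicity of the linearisation
\[
L[v] = g_f^{\bar k r} g_f^{\bar q j}\chi_{j\bar k} v_{r\bar q},
\]
and equivalently into a full $C^2$ bound on $f$ modulo lower order terms.

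Next, I would verify concavity of $F$ in the Hessian argument. Decomposing $\chi$ spectrally as $\chi = \sum_i v_i v_i^*$, one has $\operatorname{tr}(B^{-1}\chi) = \sum_i v_i^* B^{-1} v_i$, and the scalar map $B \mapsto v^* B^{-1} v$ is convex on Hermitian positive-definite matrices; hence $F$ is concave in $A$. Combined with the uniform parabolicity from the previous step, the parabolic Evans--Krylov theorem then applies and yields, on any ball $B' \Subset B$, an interior $C^{2,\alpha}$ estimate for $f$ depending only on the $C^2$ bound, the geometry of $\omega$ and $\chi$ on $B$, and the distance from $B'$ to $\partial B$.

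With $C^{2,\alpha}$ estimates in hand, differentiating the equation once in a spatial direction $\partial_k$ produces a linear parabolic equation for $\partial_k f$ whose coefficients depend on $g_f$ and are therefore $C^\alpha$. Parabolic Schauder estimates then promote the regularity to $C^{3,\alpha}$, and iterating this bootstrap yields $C^{k,\alpha}$ bounds for every $k$, hence full $C^\infty$ bounds on any fixed smaller ball, which is the required conclusion.

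The step I expect to be the main technical point is the Evans--Krylov reduction, in particular the careful localisation by cut-off functions needed to extract genuinely interior estimates from a hypothesis that does not prescribe boundary behaviour on $\partial B$. In the localised fully nonlinear setting this is by now standard, and the argument has essentially already been carried out by Chen in \cite{chen}, which is the natural reference to invoke once the setup of $F$, concavity, and uniform parabolicity described above is in place.
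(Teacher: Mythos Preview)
Your proposal is correct and follows exactly the approach the paper indicates: the paper does not give a detailed proof of this theorem but simply states that it ``can be proved by using the well-known Evans and Krylov's interior estimate'' and cites \cite{chen} and \cite{gill}. Your sketch --- uniform parabolicity from the two-sided bound on $\omega_f$, concavity of $F$ in the Hessian, Evans--Krylov for $C^{2,\alpha}$, then Schauder bootstrap --- is precisely the content of those references, spelled out in more detail than the paper itself provides.
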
 
As explained in \cite{chen}, the theorem can be proved by using the well-known Evans and Krylov's interior estimate (see \cite{gill} for a proof of the estimates in the complex case).
\begin{proof}[Proof of Theorem $\ref{main}$]
The proof of the long time existence consists in showing that every solution $f$ to \eqref{jflow} have a $C^{\infty}$-bound. Let 
$f\colon M\times [0,\epsilon_{\max})\to \mathds{R}$ be the solution to \eqref{jflow} with initial condition $f_0\in \mathcal H_0$ and assume by contradiction $\epsilon_{\max}< \infty$.  Lemma \ref{soe} implies that the second derivatives of $f$ are uniformly bounded in $M$. Since $f$ can be regarded as a collection of  solutions to the K\"ahler $J$-flow on small open balls in $\C^n$, Theorem \ref{kahler}  implies that $f$ is $C^\infty$-uniformly bounded in $M$. Therefore $f$ converges in $C^{\infty}$-norm to a smooth function $\tilde f$ as $t$ tends to $\epsilon_{\max}^{-}$. Since $\partial_t f$ is basic for every $t\in[0,\epsilon_{\max})$,  $\tilde f$ is basic and by the well-posedness of the Sasaki $J$-flow, the solution $f$ can be extended after $\epsilon_{\max}$ contradicting its maximality.  

The proof of the long time existence in the case when $\chi$ has nonnegative transverse holomorphic bisectional curvature, is obtained exactly as in the K\"ahler case. Let $f\colon M\times [0,\infty)\to \R$ be a solution to the Sasaki $J$-flow. Since $\chi$ has nonnegative holomorphic bisectional curvature, Theorem \ref{bisectional} implies that $f$ has a uniform $C^{\infty}$-bound and Ascoli-Arzel\`a implies that given a sequence  $t_j\in [0,\infty)$, $t_j\to \infty$, $f_{t_j}$ has a subsequnce converging in $C^\infty$-norm to function $f_{\infty}$ as $t_j\to \infty$. Therefore, $f$ converges to a critical map $f_{\infty}\in \mathcal{H}_0$.
\end{proof}

%\section{Cose che non capisco a cosa servono}
%and let $$\tilde f=f-\int_M f\,\eta\wedge (d\eta_f)^n.$$

%{\color{red} Lemma 3.2 di Gill (sistema di coordinate, uguale) Anche questo non serve}
%{\color{blue}\begin{lemma}
%There exists a coordinate system centered at $x_0$ such that for $j,k=1,\dots, n,$
%$$(g^T_0)_{j\bar k}(x_0)=\delta_{jk},\quad \partial_j (g^T_0)_{k\bar k}(x_0)=0,$$
%and also such that the matrix $f_{j\bar k}(x_0,t_0)$ is diagonal.
%\end{lemma}}

\end{document}